\newcommand {\ep} {\varepsilon}
\newcommand {\ii} {\infty}
\newcommand {\al} {\alpha}
\newcommand {\lb} {\lambda}
\newcommand {\sm} {\setminus}
\newcommand {\su} {\subset}
\newcommand {\wh} {\widehat}
\newcommand {\mc} {\mathcal}
\newtheorem{teo}{Theorem}[section]
\newtheorem{pro}{Proposition}[section]
\newtheorem{cor}{Corollary}[section]
\newtheorem{lm}{Lemma}[section]
\theoremstyle{definition}
\newtheorem{rem}{Remark}[section]
\title{Ergodic theorems \\ in  Banach ideals of compact operators}
\keywords{Symmetric sequence space, Banach ideal of compact operators, Dunford-Schwartz operator, individual ergodic theorem,  mean ergodic theorem}
\subjclass[2010]{46E30, 37A30, 47A35}
\begin{document}
\date{February 20, 2019}

\begin{abstract}
Let $\mc H$ be an infinite-dimensional Hilbert space, and let $\mc B(\mc H)$ ($\mc K(\mc H)$) be the $C^*$-algebra of bounded (respectively, compact) linear operators in $\mc H$. Let $(E,\|\cdot\|_E)$ be a fully symmetric sequence space. If $\{s_n(x)\}_{n=1}^\ii$ are the singular values of $x\in\mc K(\mc H)$, let $\mc C_E=\{x\in\mc K(\mc H): \{s_n(x)\}\in E\}$ with $\|x\|_{\mc C_E}=\|\{s_n(x)\}\|_E$, $x\in\mc C_E$, be the Banach ideal of compact operators generated by $E$. We show that the averages $A_n(T)(x)=\frac1{n+1}\sum\limits_{k = 0}^n T^k(x) $ converge uniformly in $\mc C_E$ for any Dunford-Schwartz operator $T$ and $x\in\mc C_E$. Besides, if $x\in\mc B(\mc H)\sm\mc K(\mc H)$, there exists a Dunford-Schwartz operator $T$ such that the sequence $\{A_n(T)(x)\}$ does not converge uniformly. We also show that the averages $A_n(T)$ converge strongly in $(\mc C_E, \|\cdot\|_{\mc C_E})$ if and only if $E$ is separable and $E \neq l^1$, as sets.
\end{abstract}

\author{A. Azizov}
\address{National University of Uzbekistan\\
Tashkent, 100174, Uzbekistan}
\email{ azizov.07@mail.ru }
\author{V. Chilin}
\address{National University of Uzbekistan\\
Tashkent, 100174, Uzbekistan}
\email{vladimirchil@gmail.com; chilin@ucd.uz}
\author{S. Litvinov}
\address{Pennsylvania State University \\
76 University Drive \\ Hazleton, PA 18202, USA}
\email{snl2@psu.edu}

\maketitle

\section{Introduction}

Let $\mc B(\mc H)$ be the algebra of bounded linear operators in a Hilbert space $\mc H$, equipped the uniform norm $\|\cdot\|_\ii$. The study of noncommutative individual ergodic theorems in the space of measurable operators affiliated with a semifinite von Neumann algebra
$\mc M\su\mc B(\mc H)$ equipped with a faithful normal semifinite trace $\tau$ was initiated by F. Yeadon.  In \cite{ye}, as a corollary of a noncommutative maximal ergodic inequality in $L^1=L^1(\mc M,\tau)$, the following individual ergodic theorem was established.

\begin{teo}\label{t11}
Let $T: L^1 \to L^1$ be a positive $L^1-L^\ii$-contraction. Then for any $x\in  L^1$ there exists $\wh x\in  L^1$ such that the averages
\begin{equation}\label{e11}
A_n(T)(x)=\frac1{n+1}\sum\limits_{k = 0}^n T^k(x)
\end{equation}
 converge to $\wh x$ bilaterally almost uniformly (in Egorov's sense), that is, given $\ep>0$, there exists a projection $e\in\mc M$ such that $\tau(\mathbf 1-e)<\ep$ and
\[
\|e( A_n(T)(x) -\wh x)e \|_\ii \to 0 \text{ \ \ as\ \ } n\to\ii,
\]
where $\mathbf 1$ is the unit of $\mc M$.
\end{teo}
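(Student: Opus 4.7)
Following Yeadon's original strategy, I would reduce bilateral almost uniform convergence of the averages to two ingredients: (a) a noncommutative maximal ergodic inequality in $L^1$, and (b) convergence on a norm-dense subset of $L^1$. A Banach-principle argument then combines these to yield convergence for arbitrary $x \in L^1$.

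The core technical step is (a). For every $x \in L^1_+$ and every $\lambda > 0$ one seeks a projection $e \in \mc M$ with
\[
\tau(\mathbf{1} - e) \leq \frac{\|x\|_1}{\lambda}
\qquad \text{and} \qquad
\|e\, A_n(T)(x)\, e\|_\ii \leq \lambda \text{ \ for every \ } n \geq 0.
\]
I would obtain this by adapting the Hopf filling scheme to the von Neumann algebra setting: for each $n$ take $q_n$ to be the spectral projection of $A_n(T)(x)$ corresponding to the interval $(\lambda,\ii)$, form the increasing sequence $p_n = \bigvee_{k \leq n} q_k$, and show, using positivity of $T$, the $L^1$-contraction property, and a telescoping computation with the partial sums $\sum_{k=0}^n T^k(x)$, that $\lambda\,\tau(p_n) \leq \|x\|_1$ uniformly in $n$. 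Setting $e = \mathbf{1} - \bigvee_n p_n$ then delivers the required maximal projection, and decomposing an arbitrary $x \in L^1$ into its positive and negative parts extends the estimate to the self-adjoint case, with an additional splitting into real and imaginary parts in general.

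For (b), I would take $\mathcal D = \ker(I - T) + (I - T)(\mc M \cap L^1)$. On $\ker(I - T)$ the averages are constant; on $(I - T)(\mc M \cap L^1)$ they telescope to $(y - T^{n+1}(y))/(n+1)$, which tends to $0$ in $\|\cdot\|_\ii$ because $T$ is an $L^\ii$-contraction and $y \in \mc M$. Hence bilateral almost uniform convergence is trivial on $\mathcal D$, and a mean ergodic argument in $L^1$ shows that $\mathcal D$ is dense there. Given $x \in L^1$ and $\ep > 0$, I would choose $x' \in \mathcal D$ with $\|x - x'\|_1$ small, apply the maximal inequality from (a) to $x - x'$ at a carefully chosen level $\lambda$, and combine with the explicit convergence of $\{A_n(T)(x')\}$ to produce a projection witnessing a Cauchy property for $\{e A_n(T)(x) e\}$ in $\|\cdot\|_\ii$. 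The main obstacle is step (a): since one cannot take pointwise suprema of functions in the noncommutative setting, the Hopf construction must be replaced by the order-theoretic manipulation of spectral projections sketched above, and controlling $\tau(p_n)$ without destroying positivity of the partial sums is the delicate part of the argument.
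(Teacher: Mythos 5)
The paper offers no proof of Theorem \ref{t11}: it is quoted verbatim from Yeadon \cite{ye}, so your attempt has to be measured against Yeadon's argument, whose overall architecture (maximal inequality in $L^1$ plus convergence on a dense set plus a Banach-principle closure argument) you have correctly identified. However, both of your key steps have genuine gaps.

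In step (a), the inequality you need, $\lb\,\tau(p_n)\leq\|x\|_1$ for $p_n=\bigvee_{k\leq n}\chi_{(\lb,\ii)}\bigl(A_k(T)(x)\bigr)$, is precisely the noncommutative analogue of Hopf's maximal ergodic theorem, and it is not delivered by ``a telescoping computation with the partial sums.'' Chebyshev plus $\|A_k(T)(x)\|_1\leq\|x\|_1$ only bounds $\tau(q_k)$ for each single $k$; the commutative proof of the uniform bound for the union uses pointwise maxima (the filling scheme or Garsia's trick), exactly the lattice operations that are unavailable here, and the trace of a join of non-commuting spectral projections cannot be controlled by summing or telescoping. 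Yeadon's lemma does not assert anything about this join: his projection $e$ is built by an inductive construction on the partial sums $S_n=\sum_{k=0}^nT^k(x)$, subtracting $\lb$ times suitably chosen projections at each stage, followed by a limiting argument producing a single projection valid for all $n$. In your sketch the sentence ``controlling $\tau(p_n)$ \dots is the delicate part'' defers the entire content of the theorem; as written, the central estimate is unproved, and it is doubtful that the join formulation itself is attainable.

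In step (b), the claimed density of $\mathcal D=\ker(I-T)+(I-T)(\mc M\cap L^1)$ in $L^1$ is false when $\tau(\mathbf 1)=\ii$, which is the relevant case. The closure of $\operatorname{Fix}(T)+\operatorname{ran}(I-T)$ is exactly the set of $x$ for which $A_n(T)(x)$ converges in $L^1$-norm, and the mean ergodic theorem fails in $L^1$ for infinite trace --- the paper itself emphasizes this, and its Proposition \ref{p41} is built on the counterexample: for the shift $S(\{\xi_n\})=\{0,\xi_1,\xi_2,\dots\}$ on the diagonal copy of $l^1$, the fixed space is $\{0\}$ while $\overline{(I-S)(l^1)}=\{\xi:\sum_n\xi_n=0\}$ is a proper closed subspace. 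So ``a mean ergodic argument in $L^1$ shows that $\mathcal D$ is dense there'' is circular and incorrect. The standard repair is to take the $L^1$-dense set $L^1\cap L^2$ (or $L^1\cap\mc M$) and prove bilateral almost uniform convergence there first, using the Hilbert-space mean ergodic theorem in $L^2$ together with additional maximal control at the $L^2$ level; that extra ingredient is missing from your outline, and without it the Banach-principle step has no dense set to start from.
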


The study of individual ergodic theorems beyond $L^1(\mc M,\tau)$ started much later with another fundamental paper by M. Junge and Q. Xu \cite{jx}, where, among other results, individual ergodic theorem was extended to the case with a positive Dunford-Schwartz operator acting in the space $L^p(\mc M ,\tau)$, $1<p<\ii$. In \cite{cl1} (\cite{cl2}), utilizing the approach of \cite{li}, an individual ergodic theorem was proved for a positive Dunford-Schwartz operator in a noncommutative Lorentz (respectively, Orlicz) space.

Let $\mc H$ be an infinite-dimensional Hilbert space. Let $E\su c_0$ be a fully symmetric sequence space. Denote by $\mc C_E$  the Banach ideal of compact operators in $\mc H$ associated with $E$. In Section 3 of the article, we obtain the following individual Dunford-Schwartz-type ergodic theorem.

\begin{teo}\label{t12}
\begin{enumerate}[(i)]
\item
Given a  Dunford-Schwartz operator $T:\mc C_E\to\mc C_E$ and $x\in\mc C_E$, there exists $\wh x\in\mc C_E $ such that
$\|A_n(T)(x) -\wh x\|_\ii\to 0$ as $n\to\ii$;
\item
if $x\in\mc B(\mc H)\sm\mc K(\mc H)$, then there exists a  Dunford-Schwartz operator $T:\mc B(\mc H)\to \mc B(\mc H)$ such that the averages $A_n(T)(x)$ do not converge uniformly.
\end{enumerate}
\end{teo}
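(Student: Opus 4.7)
For (i), the plan is to reduce a general $x\in\mc C_E$ to finite-rank operators and invoke the mean ergodic theorem in the Hilbert--Schmidt ideal $\mc C_2$. Finite-rank operators are $\|\cdot\|_\ii$-dense in $\mc K(\mc H)$, hence in $\mc C_E$. Noncommutative interpolation between the endpoints $\mc C_1$ and $\mc B(\mc H)$ shows that a Dunford--Schwartz operator $T$ is a contraction on the Hilbert space $\mc C_2$; von Neumann's mean ergodic theorem then gives that $A_n(T)(y)\to\wh y$ in $\|\cdot\|_2$, and a fortiori in $\|\cdot\|_\ii$, for every $y\in\mc C_2$. Given $x\in\mc C_E$ and $\ep>0$, I would pick a finite-rank $y$ with $\|x-y\|_\ii<\ep/3$; since each $A_n(T)$ is $\|\cdot\|_\ii$-contractive, a standard three-epsilon argument shows that $\{A_n(T)(x)\}$ is uniformly Cauchy and so converges to some $\wh x\in\mc K(\mc H)$. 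To place $\wh x$ in $\mc C_E$, interpolation also yields $\|A_n(T)(x)\|_{\mc C_E}\le\|x\|_{\mc C_E}$ (here $E$ being fully symmetric is essential); continuity of singular values in the operator norm gives $s_k(A_n(T)(x))\to s_k(\wh x)$ for each $k$, and the Fatou property of $E$ forces $\{s_k(\wh x)\}\in E$.

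For (ii), since $x\notin\mc K(\mc H)$ there exist $\dt>0$ and an orthonormal sequence $\{\xi_n\}_{n\ge 0}\su\mc H$ with $\|x\xi_n\|\ge\dt$ for every $n$. My plan is to build a Dunford--Schwartz operator $T:\mc B(\mc H)\to\mc B(\mc H)$ as the conjugation $a\mapsto VaV^*$ by the shift-type isometry $V\xi_n=\xi_{n+1}$, extended by the identity on $\overline{\mathrm{span}}\{\xi_n\}^\perp$. A direct computation in the test case $x=\mathbf{1}$ already gives $T^k(\mathbf{1})=\mathbf{1}-\sum_{j=0}^{k-1}|\xi_j\rangle\langle\xi_j|$, whence $\|A_n(T)(\mathbf{1})-A_{2n}(T)(\mathbf{1})\|_\ii$ is bounded below by a positive absolute constant, so these averages are not Cauchy in $\|\cdot\|_\ii$. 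For general non-compact $x$, one adapts the computation — possibly combined with a Marcinkiewicz-type arithmetic trick applied to the entries $\langle x\xi_p,\xi_q\rangle$ so as to handle the case when the diagonal of $x$ in this basis is degenerate — to exhibit matrix elements of $A_n(T)(x)$ that oscillate with amplitude $\gtrsim\dt$, precluding uniform convergence.

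The principal obstacle is (ii): the construction must succeed for an essentially arbitrary non-compact $x$, using only the non-vanishing $\|x\xi_n\|\ge\dt$, whereas the clean $x=\mathbf{1}$ computation exploits the much stronger structural information that $x$ commutes with $V$. The contractivity requirements in both $\|\cdot\|_1$ and $\|\cdot\|_\ii$ leave little room in choosing $T$. The other delicate point, in (i), is the membership $\wh x\in\mc C_E$, which rests on the Fatou property of the fully symmetric sequence space $E$.
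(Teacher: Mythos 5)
Your part (i) is essentially the paper's argument (mean ergodic theorem in the reflexive Hilbert--Schmidt ideal $\mc C^2$, the inequality $\|\cdot\|_\ii\le\|\cdot\|_2$, finite-rank approximation and a three-epsilon Cauchy argument in $\mc K(\mc H)$), except for the last step. To put $\wh x$ into $\mc C_E$ you invoke ``the Fatou property of $E$'', but a fully symmetric sequence space need not have the Fatou property (think of $c_0$, or of the separable part of a Marcinkiewicz space): a coordinatewise limit of elements with bounded $E$-norms need not lie in $E$, so this step is not justified as stated. The ingredients you already have do yield the conclusion, but via submajorization rather than Fatou: since $A_n(T)(x)\prec\prec x$ and $s_k(A_n(T)(x))\to s_k(\wh x)$ for each $k$, one gets $\sum_{k=1}^m s_k(\wh x)\le\sum_{k=1}^m s_k(x)$ for every $m$, i.e. $\wh x\prec\prec x$, and then full symmetry of $\mc C_E$ gives $\wh x\in\mc C_E$ with $\|\wh x\|_{\mc C_E}\le\|x\|_{\mc C_E}$. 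This is exactly the paper's Proposition 2.1; replace the Fatou appeal by it.

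In part (ii) there is a genuine gap, which you yourself flag: the construction is only verified for $x=\mathbf 1$, and the information you extract from non-compactness, an orthonormal sequence with $\|x\xi_n\|\ge\dt$, gives no control whatsoever on the diagonal entries $\langle x\xi_n,\xi_n\rangle$, which is what the $x=\mathbf 1$ computation really uses (for $x$ equal to a unilateral shift the diagonal in such a basis can vanish identically), and the ``Marcinkiewicz-type arithmetic trick'' is not specified. The gap can be closed inside your own scheme: reduce first to $x\ge0$ (and then to self-adjoint and arbitrary $x$ by support and real-part arguments, as the paper does at the end of its proof); non-compactness of $x\ge0$ yields a spectral projection $e_\lb=\{x>\lb\}$ of infinite rank, and choosing $\{\xi_n\}$ inside $e_\lb(\mc H)$ forces $\langle x\xi_n,\xi_n\rangle\ge\lb$ for all $n$. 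Then, with $T(a)=VaV^*$, one computes $\langle A_n(T)(x)\xi_j,\xi_j\rangle=\frac1{n+1}\sum_{k=0}^{\min(n,j)}\langle x\xi_{j-k},\xi_{j-k}\rangle$, which tends to $0$ for each fixed $j$ while being $\ge\lb$ at $j=n$; hence no uniform limit can exist. With this completion your route is genuinely different from, and more economical than, the paper's: the paper compresses $x$ by a conditional expectation onto the diagonal masa and then imports a commutative divergence construction involving a permutation and an alternating sign pattern, a detour that is forced in the commutative setting (where the plain shift has convergent averages) but which conjugation by a shift isometry avoids. As submitted, however, part (ii) is not proved.
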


Noncommutative mean ergodic theorem can be stated as follows: if $T$ is an $L^1-L^\ii$-contraction and $1<p<\ii$, then the averages $A_n(T)$ converge strongly in $L^p=L^p(\mc M,\tau)$, that is, given $x\in L^p$, there exists $\wh x\in L^p$ such that $\|A_n(T)(x)-\wh x\|_p \to 0$ as $n\to\ii$. If $p=1$ and $\tau(\mathbf 1)=\ii$, this is not true in general. As a consequence, if $\tau(\mathbf 1)=\ii$, mean ergodic theorem may not hold in some noncommutative symmetric spaces.
In Yeadon's paper \cite{ye1}, the following mean ergodic theorem was established.

\begin{teo}\label{t13}
Let $E=(E(\mc M,\tau),\|\cdot\|_E)$ be a noncommutative fully symmetric space such that
\begin{enumerate}[(i)]
\item
$L^1(\mc M,\tau)\cap \mc M$ is dense in $E$;
\item
$\|e_n\|_E\to 0$ for any sequence of projections $\{e_n\}\su L^1(\mc M,\tau)\cap\mc M$ with $e_n\downarrow 0$;
\item
$\|e_n\|_E/\tau(e_n) \rightarrow 0$ for any increasing sequence of projections $\{e_n\}\su\mc M$, \\$0<\tau(e_n)<\ii$, with
$\tau(e_n)\to \ii$.
\end{enumerate}
Then for any $x\in E$ and a positive $L^1-L^\ii$-contraction $T: E\to E $ there exists $\wh x \in E$ such that
$\|A_n(T)(x)-\wh x\|_E\to 0$.
\end{teo}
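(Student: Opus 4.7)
The plan is to prove $E$-norm convergence of $A_n(T)(x)$ by density, reducing to $x \in L^1(\mc M,\tau) \cap \mc M$, and then upgrading the bilaterally almost uniform convergence supplied by Theorem~\ref{t11} to $E$-norm convergence using hypotheses (ii) and (iii). Since $T$ is a positive $L^1$-$L^\ii$-contraction and $E$ is fully symmetric, the Calder\'on-Mityagin interpolation theorem yields $\|A_n(T)\|_{E\to E} \le 1$. Equicontinuity together with the density assumption (i), via the standard $\ep/3$ argument, reduces the task to showing $\|A_n(T)(x) - \wh x\|_E \to 0$ for $x \in L^1 \cap \mc M$, where $\wh x \in L^1$ is the limit provided by Theorem~\ref{t11}; since $\|A_n(T)(x)\|_\ii \le \|x\|_\ii$, we have $\wh x \in L^1 \cap \mc M$.

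Set $y_n = A_n(T)(x) - \wh x$, so $\|y_n\|_1 \le 2\|x\|_1 =: M_1$, $\|y_n\|_\ii \le 2\|x\|_\ii =: M_\ii$, and $y_n \to 0$ bilaterally almost uniformly. Let $\phi_E(t) = \|e\|_E$ denote the fundamental function of $E$, where $e$ is any projection with $\tau(e) = t$. Hypotheses (ii) and (iii) translate to $\phi_E(t)\to 0$ as $t\to 0^+$ and $\phi_E(t)/t\to 0$ as $t\to\ii$, respectively. Given $\eta > 0$, pick $\dt > 0$ with $M_\ii\phi_E(\dt) < \eta$, and $\ep > 0$ with $\ep\,\phi_E(M_1/\ep) < \eta$. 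By the definition of bilaterally almost uniform convergence, fix a projection $p$ with $\tau(\mb 1 - p) < \dt$ and $\|p y_n p\|_\ii < \ep$ for all $n$ sufficiently large, and decompose
\[
y_n = p y_n p + (\mb 1 - p) y_n + p y_n (\mb 1 - p).
\]
Each off-diagonal term factors through the projection $\mb 1 - p$ of trace $< \dt$, so its singular number function vanishes past $\dt$ and is pointwise bounded by $M_\ii$, yielding $E$-norm at most $M_\ii\phi_E(\dt) < \eta$. For the diagonal piece, the $L^1$- and $L^\ii$-bounds $\|p y_n p\|_1 \le M_1$ and $\|p y_n p\|_\ii < \ep$ together give $\mu_t(p y_n p) \le \min(\ep, M_1/t)$, whence Hardy-Littlewood submajorization produces $\mu(p y_n p) \prec\prec \ep\,\mb 1_{[0, M_1/\ep)}$ and $\|p y_n p\|_E \le \ep\,\phi_E(M_1/\ep) < \eta$. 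Summing, $\|y_n\|_E < 3\eta$ for $n$ large, so $\|y_n\|_E \to 0$.

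The key difficulty is the estimate for the diagonal piece $p y_n p$: bilaterally almost uniform convergence forces $\|p y_n p\|_\ii$ to be small, but the projection $p$ may have infinite trace, so the naive bound $\|p y_n p\|_E \le \|p y_n p\|_\ii\,\phi_E(\tau(p))$ is useless. The Hardy-Littlewood submajorization step sidesteps this by exploiting both the $L^1$- and $L^\ii$-information on $p y_n p$, and it is precisely here that (iii) --- the sublinear growth of $\phi_E$ at infinity --- becomes indispensable. Hypothesis (ii) plays the dual role for the off-diagonal error. Neither condition may be dropped, consistent with the well-known failure of the mean ergodic theorem in $L^1$ over infinite measure spaces.
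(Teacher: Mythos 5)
The first thing to note is that this article contains no proof of Theorem~\ref{t13} to compare against: the statement is quoted verbatim from Yeadon \cite{ye1} as background for Section~4, so your argument can only be measured against Yeadon's original paper, not against anything in this text. On its own terms, your outline is sound and follows the natural route: exact interpolation of the fully symmetric space $E$ in $(L^1,\mc M)$ gives $\|A_n(T)\|_{E\to E}\le 1$, the $\ep/3$ density argument reduces to $x\in L^1\cap\mc M$, Theorem~\ref{t11} supplies bilaterally almost uniform convergence, and the split $y_n=p\,y_n\,p+(\mb 1-p)y_n+p\,y_n(\mb 1-p)$ together with the fundamental-function readings of (ii) and (iii) does the rest. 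The translation of (ii) and (iii) into $\phi_E(t)\to 0$ at $0^+$ and $\phi_E(t)/t\to 0$ at $\ii$ deserves a sentence (use that $\|e\|_E$ depends only on $\tau(e)$ and that $\phi_E(t)/t$ is non-increasing by full symmetry, and treat $\tau(\mb 1)<\ii$ separately, where the tail condition is not needed), and $\wh x\in\mc M$ with $\|\wh x\|_\ii\le\|x\|_\ii$ needs a short Fatou-type justification, but these are routine.

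The one step that is genuinely misstated is the submajorization for the diagonal piece. From the pointwise bound $\mu_t(p\,y_n\,p)\le\min(\ep,\,M_1/t)$ alone you cannot conclude $p\,y_n\,p\prec\prec\ep\,\chi_{[0,M_1/\ep)}$: integrating that pointwise bound gives $\int_0^s\min(\ep,M_1/t)\,dt=M_1+M_1\log(s\ep/M_1)$ for $s>M_1/\ep$, which exceeds $M_1=\int_0^s\ep\,\chi_{[0,M_1/\ep)}\,dt$, so the "whence" fails as written. The repair is immediate and uses exactly the data you already have: $\int_0^s\mu_t(p\,y_n\,p)\,dt\le\min\bigl(s\,\|p\,y_n\,p\|_\ii,\ \|p\,y_n\,p\|_1\bigr)\le\min(\ep s,\,M_1)=\int_0^s\ep\,\chi_{[0,M_1/\ep)}(t)\,dt$ for every $s>0$, which is precisely the desired relation $p\,y_n\,p\prec\prec\ep\,\chi_{[0,M_1/\ep)}$, and then $\|p\,y_n\,p\|_E\le\ep\,\phi_E(M_1/\ep)$ follows from full symmetry as you intended, with (iii) making this small. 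With that correction the proof goes through.
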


In \cite{cl1}, a mean ergodic theorem was established for a noncommutative symmetric space $E(\mc M,\tau)$ associated with a fully symmetric function space with nontrivial Boyd indices and order continuous norm.

In Section 4, we give the following criterion for the validity of the mean ergodic theorem in a Banach ideal of compact operators in $\mc H$.

\begin{teo}\label{t14} The following conditions are equivalent:
\begin{enumerate}[(i)]
\item
For any Dunford-Schwartz operator $T:\mc C_E\to\mc C_E$ the averages $A_n(T)$ converge strongly in $\mc C_E$;
\item
$(E, \|\cdot\|_E)$ is separable and $E\neq l^1$, as sets.
\end{enumerate}
\end{teo}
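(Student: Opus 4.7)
My plan is to prove the two implications separately.

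For (ii) $\Rightarrow$ (i), I first note that since $T$ is Dunford-Schwartz, standard interpolation between $\mc C_1$ and $\mc B(\mc H)$ makes $T$ a contraction on $\mc C_E$, so $\{A_n(T)\}$ is uniformly bounded on $\mc C_E$. Because $E$ is separable, finite-rank operators are dense in $\mc C_E$, and a standard density argument reduces (i) to verifying that $A_n(T)(x)$ converges in $\mc C_E$ for every finite-rank $x$. For such $x$ we have $x \in \mc C_1$ with $\|A_n(T)(x)\|_1 \leq \|x\|_1$, and Theorem~\ref{t12} gives $\|A_n(T)(x) - \wh x\|_\ii \to 0$ with $\wh x \in \mc C_E$; by lower semicontinuity of $\|\,\cdot\,\|_1$, also $\wh x \in \mc C_1$. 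Setting $z_n = A_n(T)(x) - \wh x$, we have $\|z_n\|_\ii \to 0$ and $\|z_n\|_1 \leq 2\|x\|_1 =: C$.

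The technical core is the following lemma: if $E$ is separable fully symmetric with $E \neq l^1$ as sets and $\{a^{(n)}\}$ are non-increasing non-negative sequences with $\|a^{(n)}\|_1 \leq C$ and $a^{(n)}_1 \to 0$, then $\|a^{(n)}\|_E \to 0$; applied to the singular values of $z_n$ this closes the forward direction. To prove the lemma I would first argue that the hypotheses force the fundamental function $\phi_E(k) = \|\chi_{[1,k]}\|_E$ to satisfy $\phi_E(k)/k \to 0$: the ratio is non-increasing by concavity of $\phi_E$, and if the limit were a positive constant $c$ then duality against the Kothe associate, combined with the order continuity supplied by separability, would yield $\|\,\cdot\,\|_1 \leq c^{-1}\|\,\cdot\,\|_E$, collapsing $E$ to $l^1$ as sets. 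Given this, the pointwise estimate $a^{(n)}_k \leq \min(a^{(n)}_1, C/k)$ together with Hardy-Littlewood-Polya submajorization yields $a^{(n)} \prec\prec a^{(n)}_1 \chi_{[1,M_n]}$ with $M_n = \lfloor C/a^{(n)}_1 \rfloor \to \ii$, and therefore $\|a^{(n)}\|_E \leq C\,\phi_E(M_n)/M_n \to 0$.

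For (i) $\Rightarrow$ (ii) I argue by contrapositive in two cases. If $E = l^1$ as sets, fix an orthonormal basis $\{e_k\}$ of $\mc H$, let $V$ be the unilateral shift $Ve_k = e_{k+1}$ (so $V^*V = I$), and set $T(y) = VyV^*$; this $T$ is Dunford-Schwartz. Taking $x$ to be the rank-one projection onto $e_1$ gives $T^k(x) = e_{k+1}e_{k+1}^*$, hence $A_n(T)(x) = (n+1)^{-1}\sum_{k=1}^{n+1} e_k e_k^*$, which satisfies $\|A_n(T)(x)\|_1 = 1$ for every $n$ while $\|A_n(T)(x)\|_\ii = (n+1)^{-1} \to 0$; no convergence in $\mc C_1 = \mc C_E$ is possible. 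If $E$ is not separable, $c_{00}$ is not dense in $E$, so there exists a non-increasing $a \in E_+$ with $\|a \chi_{[n+1,\ii)}\|_E \geq \ep > 0$ for all $n$; let $x$ be the diagonal operator with diagonal $a$ and take $T(y) = V^*yV$. Then $T^k(x)$ is diagonal with diagonal $(a_{k+1}, a_{k+2}, \ldots)$, so $A_n(T)(x)$ is diagonal with non-increasing entries $b^{(n)}_j = (n+1)^{-1}\sum_{i=j}^{j+n} a_i \geq a_{j+n}$, yielding $\|A_n(T)(x)\|_{\mc C_E} = \|b^{(n)}\|_E \geq \|a\chi_{[n+1,\ii)}\|_E \geq \ep$, while $\|A_n(T)(x)\|_\ii \to 0$ by Cesaro applied to the null sequence $a$. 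Either case contradicts (i). The main obstacle is the fundamental-function estimate $\phi_E(k)/k \to 0$ in the lemma, since the analogous statement fails in quasi-Banach symmetric spaces such as weak $l^1$, so the argument must genuinely exploit both the Banach norm structure and the order continuity coming from separability.
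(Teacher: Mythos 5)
Your proposal is correct, but it follows a genuinely different route from the paper in both directions. For (ii)$\Rightarrow$(i) the paper introduces the ergodic projection $P_T$ (the uniform limit operator of $A_n(T)$ on $\mc K(\mc H)$, shown to extend to a Dunford--Schwartz operator), decomposes $x=(x-P_T(x))+P_T(x)$, and upgrades uniform convergence to $\|\cdot\|_{\mc C_E}$-convergence by citing the Dodds--Dodds--Sukochev result (the paper's Proposition 4.3) for sequences submajorized by a fixed element of $\mc C_E$; you instead reduce to finite-rank $x$ by density in $\mc C_E$ (valid since $E$ is separable, together with $\|A_n(T)\|_{\mc C_E\to\mc C_E}\le 1$) and prove a self-contained quantitative lemma resting on $\phi_E(k)/k\to 0$. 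Your route has the merit of making explicit where $E\neq l^1$ enters, whereas in the paper this hypothesis is visible only through the external citation; the paper's route gives in return the identification of the limit as $P_T(x)$ without a density step. Note that your fundamental-function claim needs neither K\"othe duality nor order continuity: full symmetry gives $\frac1k\bigl(\sum_{j\le k}\xi_j^*\bigr)\chi_{[1,k]}\prec\prec\xi$, hence $\sum_{j\le k}\xi_j^*\le k\|\xi\|_E/\phi_E(k)\le c^{-1}\|\xi\|_E$ whenever $\phi_E(k)\ge ck$, forcing $E\subseteq l^1$. For (i)$\Rightarrow$(ii) the paper lifts the coordinate shift on $l^\ii$ to $\mc B(\mc H)$ through the conditional expectation onto the diagonal von Neumann subalgebra and the isometry $\Phi$ (Propositions 4.1--4.2 via Theorem 3.5), while you conjugate directly by the unilateral shift, $T(y)=VyV^*$ resp.\ $T(y)=V^*yV$, applied to a rank-one projection in the case $E=l^1$ (where, as in the paper, you should invoke that coincidence of $E$ and $l^1$ as sets forces equivalence of the norms) and to a diagonal operator whose tails witness failure of order continuity in the non-separable case; these are the same shift counterexamples in substance, but your implementation bypasses the conditional-expectation machinery. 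Two small repairs: take $M_n=\lceil C/a^{(n)}_1\rceil$ so that $a^{(n)}\prec\prec a^{(n)}_1\chi_{[1,M_n]}$ really holds, and justify the monotonicity of $\phi_E(k)/k$ via $\frac{k}{k+1}\chi_{[1,k+1]}\prec\prec\chi_{[1,k]}$ rather than by appealing to concavity of $\phi_E$.
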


Commutative counterparts of Theorems \ref{t12} and \ref{t14} were established in \cite{ca}.

In the last section of the article, we present applications of Theorems \ref{t12} and \ref{t14} to the well-studied Orlicz, Lorentz, and Marcinkiewicz  ideals of compact operators.

\section{Preliminaries}

Let $l^\ii$ (respectively,  $c_0$)  be the Banach lattice of bounded (respectively, converging to zero) sequences $\{\xi_n\}_{n=1}^\ii$ of complex numbers equipped with the norm $\|\{\xi_n\}\|_\ii=\sup\limits_{n \in \mathbb N} |\xi_n|$, where $\mathbb N$ is the set of natural numbers. If $2^{\mathbb N}$ is the $\sigma$-algebra of subsets of $\mathbb N $ and
$\mu(\{n\})=1$ for each $n\in\mathbb N$, then $(\mathbb N, 2^{\mathbb N},\mu)$ is a $\sigma$-finite measure space such that $L^\ii(\mathbb N, 2^{\mathbb N},\mu)=l^\ii$ and
\[
L^1(\mathbb N, 2^{\mathbb N},\mu)=l^1=\left \{\{ \xi_n \}_{n = 1}^\ii\su\mathbb C:\ \|\{ \xi_n \} \|_1
=\sum_{n=1}^\ii |\xi_n|<\ii\right \}\subset l^\ii,
\]
where $\mathbb C$ is the set of complex numbers.

If $\xi=\{\xi_n\}_{n = 1}^\ii\in l^\ii$, then the {\it non-increasing rearrangement} $\xi^*:(0,\ii) \to (0,\ii)$ of $\xi$ is defined by
\[
\xi^*(t)=\inf\{\lb:\mu\{|\xi|>\lb\}\leq t\},\ \ t > 0,
\]
(see, for example, \cite[Ch.\,2, Definition 1.5]{bs}). As such, the non-increasing rearrangement of
a sequence $\{\xi_n\}_{n = 1}^\ii \in l^\ii$ can be identified with the sequence $\xi^*=\{\xi_n^*\}_{n=1}^\ii$, where
\[
\xi_n^*= \inf\left\{ \sup\limits_{n \notin F} |\xi_n |: F\su \mathbb N,\ |F|<n\right\}.
\]
If $\{\xi_n\}\in c_0$, then $\xi_n^* \downarrow 0$; in this case there exists a bijection $\pi:\mathbb N \rightarrow \mathbb N$ such that $|\xi_{\pi(n)}| = \xi_n^*$, $n\in\mathbb N$.

{\it Hardy-Littlewood-Polya partial order} in the space  $l^\ii$ is defined as follows:
\[
\xi=\{\xi_n\}\prec\prec\eta=\{\eta_n\}\ \ \Longleftrightarrow \ \ \sum_{n=1}^m \xi^*_n \leq \sum_{n=1}^m \eta^*_n \ \text{\ \ for all\ \ } \ m\in\mathbb N.
\]

A non-zero linear subspace $E\su l^\ii$ with a Banach norm $\|\cdot\|_E$ is called a {\it symmetric} ({\it fully symmetric}) sequence space if
\[
\eta \in E, \  \xi \in l^\ii, \  \xi^* \leq \eta^*\text{\ (resp.},\ \xi^*\prec \prec \eta^*) \ \Longrightarrow \ \xi \in E \text{\ \ and\ \ }\| \xi \|_E \leq \| \eta\|_E.
\]
Every fully symmetric sequence space is a symmetric sequence space. The converse is not true in general. At the same time, any separable symmetric sequence space  is a fully symmetric space.

If $(E,\|\cdot\|_E)$ is a symmetric sequence  space, then 
\[
\|\xi\|_E=\|\,|\xi|\,\|_E = \|\xi^*\|_E \text{ \ \ for all\ \ }\xi\in E.
\]
Besides, if $E_h=\{\{\xi_n\}_{n = 1}^\ii \in E :\ \xi_n\in\mathbb R\text{ \ for each\ }n\}$, where  $\mathbb R$ is the set of real numbers, then $(E_h,\|\cdot\|_E)$ is a Banach lattice with respect to the natural partial order
\[
\{\xi_n\}\leq\{\eta_n\} \ \Longleftrightarrow \ \xi_n \leq \eta_n \ \ \text{for all} \ \ \ n \in \mathbb N.
\]

Immediate examples of fully symmetric sequence spaces are $(l^\ii,\|\cdot\|_\ii)$ and $(c_0,\|\cdot\|_\ii)$ and the Banach lattices
\[
l^p=\left\{\xi=\{\xi_n\}_{n=1}^\ii\in c_0:\ \|\xi\|_p=\left(\sum_{n=1}^\ii|\xi_n|^p\right)^{1/p}<\ii\right\},\ 1\leq p<\ii.
\]
For any symmetric sequence space $(E,\|\cdot\|_{E})$ the following continuous embeddings hold \cite[Ch.\,2, \S\,6, Theorem 6.6]{bs}:
\begin{equation}\label{e1}
(l^1,\|\cdot\|_1)\su (E,\|\cdot\|_E) \subset (l^\ii,\|\cdot\|_\ii).
\end{equation}
Besides, $\|\xi\|_E\leq \|\xi\|_1$ for all $\xi \in l^1$ and  $\|\xi\|_\ii\leq \|\xi\|_E$ for all $\xi \in E$.

If there is $\xi\in E\sm c_0$, then $\xi^*\geq\al\mathbf1$ for some $\al>0$, where $\mathbf1 = \{1,1,...\}$. Consequently, $\mathbf1\in E$ and $E = l^\ii$. Therefore, either $E\su c_0$ or $E=l^\ii$.

Now, let $(\mc H,(\cdot, \cdot))$ be an infinite-dimensional Hilbert space over $\mathbb C$, and let $(\mc B(\mc H),\|\cdot\|_\ii)$ be the $C^*$-algebra of bounded linear operators in $\mc H$.  Denote by $\mc K(\mc H)$
($\mc F(\mc H)$) the two-sided ideal of compact (respectively, finite rank) linear operators in $\mc B(\mc H)$. It is well known that, for any proper two-sided ideal $\mc I\su\mc B(\mc H)$, we have $\mc F(\mc H)\su\mc I$, and if $\mc H$ is separable, then $\mc I\su\mc K(\mc H)$ (see, for example, \cite[Proposition 2.1]{si}).
At the same time, if $\mc H$ is a non-separable Hilbert space, then there exists a proper two-sided ideal $\mc I\su\mc B(\mc H)$ such that $\mc K(\mc H) \subsetneqq \mc I$.

Denote $\mc B_h(\mc H)=\{x \in\mc B(\mc H): x =x^{\ast}\}$, $\mc B_+(\mc H)=\{x \in\mc B(\mc H): x\ge 0\}$, and let
$\tau:\mc B_+(\mc H)\to [0, \ii]$ be the {\it canonical trace} on $\mc B(\mc H)$, that is,
\[
\tau(x)=\sum\limits_{j \in J} (x\varphi_j,\varphi_j), \ \ x\in\mc B(\mc H),
\]
where $\{\varphi_j\}_{j \in J}$ is an orthonormal basis in $\mc H$ (see, for example, \cite[Ch.\,7, E.\,7.5]{sz}).

Let $\mc P(\mc H)$ be the lattice of projections in $\mc H$. If $\mathbf 1$ is the identity of $\mc B(\mc H)$ and
$e\in\mc P(\mc H)$, we will write $e^\perp=\mathbf 1-e$.

Let $x\in\mc B(\mc H)$, and let $\{e_\lb\}_{\lb\ge 0}$ be the spectral family of projections for the absolute value
$|x|= (x^*x)^{1/2}$ of $x$, that is, $e_\lb=\{|x|>\lb\}$.
If $t>0$, then the {\it $t$-th generalized singular number} of $x$, or the {\it non-increasing rearrangement} of $x$,
is defined as
\[
\mu_t(x)=\inf\{\lb>0:\ \tau(e_\lb^\perp)\leq t\}
\]
(see \cite{fk}).

A non-zero linear subspace  $X\su\mc B(\mc H)$ with a Banach norm $\|\cdot\|_X$ is called {\it symmetric} ({\it fully symmetric}) if the conditions
\[
x\in X, \ y\in\mc B(\mc H), \ \mu_t(y)\leq \mu_t(x)\text{ \ \ for all\ \ } t>0 \
\]
(respectively,
\[
x\in X, \ y\in\mc B(\mc H), \ \int\limits_0^s\mu_t(y)dt\leq\int\limits_0^s\mu_t(x)dt \text{ \ \ for all\ \ } s>0 \ \  (\text {writing} \  y \prec\prec x))
\]
imply that $y\in X$ and $\| y\|_X\leq \| x\|_X$.

The spaces $(\mc B(\mc H),\|\cdot\|_\ii)$ and $(\mc K(\mc H),\|\cdot\|_\ii)$ as well as the classical Banach two-sided ideals
\[
L^p(\mc B(\mc H), \tau)=\mc C^p =\{x\in\mc K(\mc H) :\ \|x\|_p=\tau(|x|^p)^{1/p}<\ii\}, \  1 \leq p<\ii,
\]
are examples of fully symmetric spaces.

It should be noted that  for every symmetric space $(X,\|\cdot\|_X) \su \mc B(\mc H)$ and all $x\in X$, $a, b\in\mc B(\mc H)$,
\[
\|x\|_X = \|\,|x|\,\|_X = \|x^*\|_X, \ \ axb \in X,\text{\ \ and\ \ }\|axb\|_X \leq \|a\|_\ii\|b\|_\ii\|x\|_X.
\]

\begin{rem}\label{r1}
If $X\su\mc B(\mc H)$ is a symmetric space and there exists a projection $e\in \mc P(\mc H)\cap X$ such that $\tau(e)=\ii$, that is, $\dim e(\mc H)= \ii$, then $\mu_t(e)=\mu_t(\mathbf 1)= 1$ for every $t \in (0, \infty)$. Consequently,  $\mathbf 1 \in X$ and $X = \mc B(\mc H)$. If $X \neq \mc B(\mc H)$ and $x \in X$, then $e_\lb=\{|x|>\lb\}$ is a finite-dimensional projection, that is,
$\dim e_\lb (\mc H)<\ii$ for all $\lb>0$. This means that $x\in\mc K(\mc H)$, hence $X\su\mc K(\mc H)$. Therefore, either $X=\mc B(\mc H)$ or $X\su\mc K(\mc H)$.

Thus, if $\mc H$ is non-separable, then there exists a proper two-sided ideal  $\mc I\su\mc B(\mc H)$ such that $\mc K(\mc H) \subsetneqq \mc I $ and $(\mc I,\|\cdot\|_\ii)$ is a Banach space which is not a symmetric subspace of $\mc B(\mc H)$.
\end{rem}

If $x\in\mc K(\mc H)$, then $|x|=\sum\limits_{n=1}^{m(x)} s_n(x) p_n$ (if $m(x)=\ii$, the series converges uniformly),
where $\{s_n(x)\}_{n=1}^{m(x)}$ is the set of singular values of $x$, that is, the set of eigenvalues of the compact operator $|x|$ in the decreasing order, and $p_n$ is the projection onto the eigenspace corresponding to $s_n(x)$. Consequently, the non-increasing rearrangement $\mu_t(x)$ of $x\in\mc K(\mc H)$ can be identified with the sequence $\{s_n(x)\}_{n=1}^\ii$, $s_n(x) \downarrow 0$ (if $m(x)<\ii$, we set $s_n(x)=0$ for all $n>m(x)$).

Let $(X,\|\cdot\|_X)\su \mc K(\mc H)$ be a symmetric space. Fix an orthonormal basis  $\{\varphi_j\}_{j \in J}$ in $\mc H$ and choose a countable subset $\{\varphi_{j_n}\}_{n=1}^\ii$.  Let $p_n$ be the one-dimensional projection on the  subspace $\mathbb C\cdot\varphi_{j_n}\su\mc H$.  It is clear that the set
\[
E(X)=\left\{\xi=\{\xi_n\}_{n = 1}^\ii\in c_0: \ x_\xi =\sum\limits_{n=1}^\ii\xi_np_n \in X\right\} \]
(the series converges uniformly),
is a symmetric sequence space with respect to the norm $\|\xi\|_{E(X)} = \| x_\xi\|_X$. Consequently, each symmetric subspace $(X,\|\cdot\|_X)\su\mc K(\mc H)$ uniquely generates a symmetric sequence space $(E(X), \|\cdot\|_{E(X)})\su c_0$. The converse is also true: every symmetric sequence space $(E,\|\cdot\|_E)\su c_0$ uniquely generates a symmetric space $(\mc C_E,\|\cdot\|_{\mc C_E})  \su \mc K(\mc H)$ by the following rule (see, for example, \cite[Ch.\,3, Section 3.5]{lsz}):
\[
\mc C_E =\{x\in\mc K(\mc H):\, \{s_n(x)\}\in E\},\ \ \|x\|_{\mc C_E} = \|\{s_n(x)\}\|_{E}.
\]
In addition,
\[
E(\mc C_E)=E, \ \|\cdot\|_{E(\mc C_E)}=\|\cdot\|_E, \ \mc C_{E(\mc C_E)}=\mc C_E, \ \|\cdot\|_{\mc C_{E(\mc C_E)} }=\|\cdot\|_{\mc C_E}.
\]

We will call the pair $(\mc C_E,\|\cdot\|_{\mc C_E})$ a {\it Banach ideal of compact operators} (cf.\,\cite[Ch.\,III]{gk}). It is known that
$(\mc C^p,\|\cdot\|_p)=(\mc C_{l^p},\|\cdot\|_{\mc C_{l^p}})$ for all $1\leq p<\ii$ and
$(\mc K(\mc H),\|\cdot\|_\ii)= (\mc C_{c_0},\|\cdot\|_{\mc C_{c_0}})$.

Hardy-Littlewood-Polya partial order in the Banach ideal $\mc K(\mc H)$ is defined by
\[
x \prec\prec y, \ x, y \in \mc K(\mc H) \ \Longleftrightarrow \ \{s_n(x)\}\prec\prec \{s_n(y)\}.
\]
We say that a Banach ideal $(\mc C_{E}, \|\cdot\|_{\mc C_{E}})$ is {\it fully symmetric} if conditions  $y\in\mc C_E$,
$x\in\mc K(\mc H)$, $x \prec \prec y$ entail that $x\in\mc C_E$ and $ \| x \|_{\mc C_E}\leq\| y\|_{\mc C_E}$. It is clear that $(\mc C_E,\|\cdot\|_{\mc C_E})$ is a fully symmetric ideal if and only if $(E, \|\cdot\|_E)$ is a fully symmetric sequence space.

Examples of fully symmetric ideals include $(\mc K(\mc H),\|\cdot\|_\ii)$ as well as the Banach ideals $(\mc C^p,\|\cdot\|_p)$  for all $1\leq p<\ii$. It is clear that $\mc C^1\su\mc C_E\su\mc K(\mc H)$ for every symmetric sequence space $E \su c_0$ with $\|x\|_{\mc C_E}\leq\|x\|_1$ and $\|y\|_\ii\leq \|y\|_{\mc C_E}$ for all $x\in\mc C^1$ and $y\in\mc C_E$.

We will need the following property of Hardy-Littlewood-Polya partial order.
\begin{pro}\label{p1}
 If $x, y, y_k\in\mc K(\mc H)$ are such that $y_k\prec\prec x$ for all $k\in\mathbb N$ and $\|y_k-y\|_\ii\to 0$ as $k \to \ii$, then
$y\prec\prec x$.
\end{pro}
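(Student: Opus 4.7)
The plan is to reduce the claim to a termwise statement about the singular value sequences via continuity of $s_n(\cdot)$ in the uniform norm, and then pass to the limit in the finite partial-sum inequalities that define Hardy--Littlewood--Polya order.

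First, I would recall the minimax characterization for compact operators
\[
s_n(z) = \inf\bigl\{\|z - F\|_\ii : F \in \mc B(\mc H),\ \operatorname{rank}(F) \leq n-1\bigr\},\qquad z\in\mc K(\mc H),
\]
which shows that each $s_n:(\mc K(\mc H),\|\cdot\|_\ii)\to[0,\ii)$ is $1$-Lipschitz; more precisely, $|s_n(y_k)-s_n(y)|\leq \|y_k-y\|_\ii$ for every $n\in\mathbb N$ and $k\in\mathbb N$. Together with the hypothesis $\|y_k-y\|_\ii\to 0$, this yields $s_n(y_k)\to s_n(y)$ as $k\to\ii$ for every fixed $n$.

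Next, I would fix $m\in\mathbb N$ and use the assumption $y_k\prec\prec x$, which by the identification of $\mu_t(\cdot)$ with $\{s_n(\cdot)\}$ (described in the preliminaries) is equivalent to
\[
\sum_{n=1}^{m} s_n(y_k) \leq \sum_{n=1}^{m} s_n(x) \text{ \ \ for every\ \ } m\in\mathbb N.
\]
Because the left-hand side is a finite sum, the termwise convergence established above gives
\[
\sum_{n=1}^{m} s_n(y) = \lim_{k\to\ii}\sum_{n=1}^{m} s_n(y_k) \leq \sum_{n=1}^{m} s_n(x).
\]
Since $m$ was arbitrary, this is exactly $\{s_n(y)\}\prec\prec\{s_n(x)\}$, i.e., $y\prec\prec x$.

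The argument is essentially a one-line limit once the correct tool is in hand, so the only substantive point — and the one that does some real work — is the continuity estimate $|s_n(y_k)-s_n(y)|\leq\|y_k-y\|_\ii$. Everything else is interchanging a limit with a \emph{finite} sum, which requires no justification beyond the termwise convergence. No compactness or uniformity in $n$ is needed, because Hardy--Littlewood--Polya domination is tested one $m$ at a time.
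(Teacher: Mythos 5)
Your proof is correct and follows essentially the same route as the paper: the key inequality $|s_n(y_k)-s_n(y)|\leq\|y_k-y\|_\ii$ (which the paper cites from Gohberg--Krein and you justify via the rank-one approximation characterization) gives termwise convergence of singular values, and the conclusion follows by passing to the limit in each finite partial sum.
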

\begin{proof}
Since $y_k\prec\prec x$, it follows that $\sum_{n=1}^m s_n(y_k)\leq\sum_{n=1}^m s_n(x)$ for all $m, k\in\mathbb N$. By \cite[Ch.II, \S\,2, Sec.\.3, Corollary 2.3]{gk}, $|s_n(y_k)-s_n(y)|\leq \|y_k-y\|_\ii \to 0$, hence $\sum_{n=1}^m s_n(y_k) \to \sum_{n=1}^m s_n(y)$ as $k \to \ii$ for every $m\in\mathbb N$. Therefore
\[
\sum_{n=1}^m s_n(y)= \lim\limits_{k \to \ii} \sum\limits_{n=1}^m s_n(y_k) \leq \sum\limits_{n=1}^m s_n(x)
\]
for all $m$.
\end{proof}

Define
\[\mc R_\tau=\{x \in\mc B(\mc H):\ \mu_t(x) \to 0 \text{ \ as \ } t\to\ii\}.
\]
By \cite[Proposition 2.7]{ddp}, $\mc R_\tau$ is the closure of $\mc C^1$ in $(\mc B(\mc H), \|\cdot\|_\ii)$, implying that
$\mc R_\tau=\mc K(\mc H)$. Therefore $\lim\limits_{t\to\ii}\mu_t(x)>0$ for every $x\in\mc B(\mc H)\sm\mc K(\mc H)$; in particular,  there exists $\lb >0$ such that $\tau\{|x| > \lb\}=\ii$.

A linear operator $T:\mc B(\mc H)\to\mc B(\mc H)$ is called a {\it Dunford-Schwartz operator} if
\[
\|T(x)\|_1\leq\| x\|_1\text{\ \ for all\ \ }x\in\mc C^1\text{ \ \ and \ \ } \|T(x)\|_\ii\leq\| x\|_\ii\text{\ \ for all\ \ }x\in\mc B(\mc H).
\]
In what follows, we will write $T\in DS$ ($T\in DS_+$) to indicate that $T$ is a Dunford-Schwartz operator (respectively, a positive Dunford-Schwartz operator, that is, $T\in DS$ and $T(\mc B_+(\mc H))\su\mc B_+(\mc H)$).

Any fully symmetric sequence space $E$ is an exact interpolation space in the Banach pair $(l^1, l^\ii)$ (see, for example, \cite [Ch.\,II, \S\,4, Sec.\,2] {kps}). Therefore, for such $E$, the fully symmetric ideal $\mc C_E$ is an exact interpolation space  in the Banach pair $(\mc C^1,\mc B(\mc H))$; see \cite [Theorem 2.4]{DDP}. It then follows that $T(\mc C_E)\su\mc C_E$ and $\|T\|_{\mc C_E\to\mc C_E}\leq 1$ for all $T\in DS$. In particular, $T(\mc K(\mc H))\su\mc K(\mc H)$ and the restriction of $T$ on $\mc K(\mc H)$ is a linear contraction (also denoted by $T$). We note that if $T\in DS$, then
$A_n(T)\in DS$; also, $T(x)\prec\prec x$ and $A_n(T)(x)\prec\prec x$ for any $x\in\mc K(\mc H)$ and $n$.

\section{Individual ergodic theorem  in  fully symmetric ideals of compact operators}
Let $\mc H$, $\tau:\mc B_+(\mc H)\to [0, \ii]$, and $\mc C^1$ be as above. Utilizing Theorem \ref{t11} with $\mc M=\mc B(\mc H)$ and taking into account that $\tau(e)\ge1$ for every $0\neq e\in\mc P(\mc H)$, we arrive at the following.

\begin{teo} \label{t31}
Given $T\in DS_+$ and $x\in \mc C^1$, there exists $\wh x\in\mc C^1$ such that
$\|A_n(T)(x) -\wh x\|_\ii\to 0$ as $n\to\ii$.
\end{teo}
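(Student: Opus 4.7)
The strategy is to apply Theorem \ref{t11} in the setting $\mc M = \mc B(\mc H)$ with $\tau$ the canonical trace, and then to exploit the special arithmetic of $\tau$ on projections to upgrade bilaterally almost uniform convergence to uniform convergence.

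First I would verify that the hypotheses of Theorem \ref{t11} are met. With $\mc M = \mc B(\mc H)$ and canonical trace $\tau$, we have $L^1(\mc M,\tau) = \mc C^1$. Since $T\in DS_+$, the operator $T$ is positive, $\|T(x)\|_1\le\|x\|_1$ for $x\in\mc C^1$, and $\|T(x)\|_\ii\le\|x\|_\ii$ for $x\in\mc B(\mc H)$, so $T$ is a positive $L^1$-$L^\ii$-contraction in the sense required. Consequently, for any $x\in\mc C^1$ there exists $\wh x\in\mc C^1$ such that the averages $A_n(T)(x)$ converge to $\wh x$ bilaterally almost uniformly: for every $\ep>0$ there is a projection $e\in\mc P(\mc H)$ with $\tau(\mathbf 1 - e)<\ep$ and $\|e(A_n(T)(x)-\wh x)e\|_\ii\to 0$ as $n\to\ii$.

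Next comes the key observation. The canonical trace on $\mc B(\mc H)$ counts dimensions of ranges of projections, so $\tau(p)\ge 1$ for every nonzero $p\in\mc P(\mc H)$. Choosing any $\ep\in(0,1)$ in the conclusion above forces $\tau(\mathbf 1-e)<1$, hence $\mathbf 1 - e = 0$ and $e=\mathbf 1$. Thus the bilaterally almost uniform convergence collapses to $\|A_n(T)(x)-\wh x\|_\ii\to 0$, as required.

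There is essentially no further obstacle: the theorem is a direct corollary of Yeadon's result once one recognizes that in the type $\mathrm{I}_\ii$ factor $\mc B(\mc H)$ the canonical trace is integer-valued on projections, which rules out nontrivial "small" exceptional projections and automatically promotes Egorov-type convergence to norm convergence.
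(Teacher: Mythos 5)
Your proposal is correct and follows exactly the paper's own argument: apply Yeadon's Theorem \ref{t11} with $\mc M=\mc B(\mc H)$ and the canonical trace, then observe that $\tau(e)\ge 1$ for every nonzero projection $e$, so taking $\ep<1$ forces the exceptional projection to vanish and bilateral almost uniform convergence becomes uniform convergence. No gaps.
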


Theorem \ref{t31} can be extended to the fully symmetric ideal $\mc K(\mc H)$. In fact, such an extension holds for any $T\in DS$:

\begin{teo}\label{t32}
Let $T\in DS$ and $x\in\mc K(\mc H)$. Then there exists $\wh x\in\mc K(\mc H)$ such that $\|A_n(T)(x)-\wh x\|_\ii\to 0$ as $n\to\ii$.
\end{teo}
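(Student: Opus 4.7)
The strategy is to establish $\|\cdot\|_\ii$-convergence first on the $\|\cdot\|_\ii$-dense subspace $\mc C^2\su\mc K(\mc H)$, and then to extend to all of $\mc K(\mc H)$ via a standard Cauchy-sequence argument, exploiting the fact that each $A_n(T)$ is a $\|\cdot\|_\ii$-contraction on $\mc K(\mc H)$ (which was already recorded at the end of Section 2).

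For the first step, I would invoke the interpolation principle recalled at the end of Section 2, applied to the fully symmetric sequence space $E=l^2$: the Dunford--Schwartz operator $T$ restricts to a linear contraction on the Hilbert space $(\mc C^2,\|\cdot\|_2)$. Von Neumann's mean ergodic theorem then produces, for every $x\in\mc C^2$, an element $\wh x\in\mc C^2$ with $\|A_n(T)(x)-\wh x\|_2\to 0$ as $n\to\ii$. Because $\|y\|_\ii=s_1(y)\leq\bigl(\sum_n s_n(y)^2\bigr)^{1/2}=\|y\|_2$ for every $y\in\mc C^2$, this $\|\cdot\|_2$-convergence upgrades automatically to $\|A_n(T)(x)-\wh x\|_\ii\to 0$, and $\wh x\in\mc C^2\su\mc K(\mc H)$.

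To finish, fix $x\in\mc K(\mc H)$ and $\ep>0$. Since $\mc F(\mc H)\su\mc C^2$ is $\|\cdot\|_\ii$-dense in $\mc K(\mc H)$, choose $y\in\mc C^2$ with $\|x-y\|_\ii<\ep$. Using that $\|A_n(T)\|_{\mc K(\mc H)\to\mc K(\mc H)}\leq 1$, I would estimate
\[
\|A_n(T)(x)-A_m(T)(x)\|_\ii \leq 2\ep + \|A_n(T)(y)-A_m(T)(y)\|_\ii,
\]
and the last term vanishes as $n,m\to\ii$ by the preceding step. Hence $\{A_n(T)(x)\}$ is $\|\cdot\|_\ii$-Cauchy, and its limit $\wh x$ lies in $\mc K(\mc H)$ because the latter is $\|\cdot\|_\ii$-closed in $\mc B(\mc H)$.

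The only mildly delicate point is confirming that the contractive restriction of $T$ to $\mc C^2$ agrees with its action on $\mc K(\mc H)$ (so that von Neumann's limit is the same object as the would-be $\|\cdot\|_\ii$-limit), but this is precisely what the interpolation statement from Section 2 delivers: the same operator $T$ acts contractively on every $\mc C_E$ with $E\su c_0$ fully symmetric, in particular on $\mc C^2$ and on $\mc K(\mc H)$.
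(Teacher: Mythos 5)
Your proof is correct and follows essentially the same route as the paper: mean ergodic convergence in the Hilbert space $\mc C^2$ (the paper cites the reflexive-Banach-space mean ergodic theorem of Dunford--Schwartz, which for $\mc C^2$ amounts to von Neumann's theorem), upgrade to $\|\cdot\|_\ii$ via $\|y\|_\ii\leq\|y\|_2$, then a density/Cauchy argument on $\mc K(\mc H)$ using that each $A_n(T)$ is a uniform-norm contraction and $(\mc K(\mc H),\|\cdot\|_\ii)$ is complete. No gaps.
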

\begin{proof}
Since $T(\mc C^2)\su\mc C^2$, $\|T\|_{\mc C^2 \to \mc C^2} \leq 1$ and the Banach space $\mc C^2$ is reflexive, by the mean ergodic theorem \cite[Ch.\,VIII, \S\,5, Corollary 4]{ds}, the sequence $\{A_n(T)(x)\}$ converges strongly in $\mc C^2$, that is, for every $x\in\mc C^2$ there exists $\wh x\in \mc C^2$ such that $\|A_n(T)(x)-\wh x\|_2\to 0$. As $\|\xi\|_\ii\leq \|\xi\|_2$ for all $\xi\in l^2$, it follows that $\|x\|_\ii \leq \|x\|_2$ for all $x \in\mc C^2$. Consequently,
\[
\|A_n(T)(x)-\wh x\|_\ii\to 0\text{ \ \ for every\ \ }x\in \mc C^2.
\]

Let now $x\in\mc K(\mc H)$ and $\ep>0$. Then there exists $x_\ep\in\mc F(\mc H)\su\mc C^2$ such that
$\|x-x_\ep\|_\ii<\ep/4$. Since the sequence $A_n(T)(x_\ep)$ converges uniformly, there exists $N=N(\ep)$ such that
\[
\|A_m(T)(x_\ep)-A_n(T)(x_\ep)\|_\ii<\frac\ep2\text{ \ \ whenever\ \ }m, n\geq N.
\]
Therefore,
\[
\begin{split}
\|A_m(T)(x)&-A_n(T)(x)\|_\ii \leq \|A_m(T)(x-x_\ep)-A_n(T)(x-x_\ep)\|_\ii\\
&+\|A_m(T)(x_\ep)-A_n(T)(x_\ep)\|_\ii\leq 2\|x-x_\ep\|_\ii+\frac\ep2<\ep.
\end{split}
\]
for all $m, n\ge N$.
Thus, since the space $(\mc K(\mc H),\|\cdot\|_\ii)$ is complete, there exists $\wh x\in\mc K (\mc H)$ such that
$\|A_n(T)(x)-\wh x\|_\ii \to 0$.
\end{proof}

By virtue of Theorem \ref{t32}, we now derive part (i) of Theorem \ref{t12}, an individual ergodic theorem in fully symmetric ideals of compact operators:

\begin{teo}\label{t33}
Let $\mc C_E$ be a fully symmetric ideal of compact operators, and let $T\in DS$.
Then, given $x\in \mc C_E$, the averages $A_n(T)(x)$ converge uniformly to some $\wh x\in\mc C_E$.
\end{teo}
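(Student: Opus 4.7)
The statement combines two ingredients already established earlier in the excerpt, so the plan is essentially to splice them together. First I would invoke Theorem \ref{t32}: since $\mc C_E \subset \mc K(\mc H)$ and $T\in DS$ restricts to a contraction on $\mc K(\mc H)$, any $x\in\mc C_E$ is in particular in $\mc K(\mc H)$, so there exists $\wh x\in\mc K(\mc H)$ with $\|A_n(T)(x)-\wh x\|_\ii\to 0$. This already delivers the uniform convergence part of the conclusion; what remains is to upgrade the limit $\wh x$ from $\mc K(\mc H)$ to $\mc C_E$.

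The natural route is through Hardy--Littlewood--Polya majorization together with full symmetry of the ideal. The last paragraph of Section 2 records that $A_n(T)(x)\prec\prec x$ for every $n$ (this uses $A_n(T)\in DS$, which is noted immediately before Section 3). Combined with the uniform convergence $\|A_n(T)(x)-\wh x\|_\ii\to 0$, Proposition \ref{p1} applies with $y_k = A_k(T)(x)$ and $y=\wh x$, yielding $\wh x \prec\prec x$. Since $\mc C_E$ is a fully symmetric ideal and $x\in\mc C_E$, the defining property of full symmetry gives $\wh x\in\mc C_E$ with $\|\wh x\|_{\mc C_E}\leq\|x\|_{\mc C_E}$, completing the proof.

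There is essentially no hard step here; the proof is short by design because the real work was done in Theorem \ref{t32} (norm convergence via reflexivity of $\mc C^2$ plus a density argument in $\mc K(\mc H)$) and in Proposition \ref{p1} (persistence of majorization under uniform limits). The only thing one must be careful about is to notice that $\mc C_E$ need not be closed in $(\mc K(\mc H),\|\cdot\|_\ii)$ in general, so one cannot conclude $\wh x\in\mc C_E$ merely from $A_n(T)(x)\in\mc C_E$ and uniform convergence; the majorization argument is what bypasses this subtlety.
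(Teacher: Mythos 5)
Your proposal is correct and follows exactly the paper's own argument: apply Theorem \ref{t32} to get uniform convergence to some $\wh x\in\mc K(\mc H)$, use the majorization $A_n(T)(x)\prec\prec x$ together with Proposition \ref{p1} to conclude $\wh x\prec\prec x$, and invoke full symmetry of $\mc C_E$ to place $\wh x$ in $\mc C_E$. Your closing remark about why uniform closedness of $\mc C_E$ cannot be assumed is a sensible observation, but the route is the same as the paper's.
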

\begin{proof}
As $\mc C_E\su\mc K(\mc H)$, it follows from Theorem \ref{t32} that the sequence
$\{A_n(T)(x)\}$ converges uniformly to some $\wh x\in\mc K(\mc H)$, while Proposition \ref{p1}
implies that $\wh x \prec\prec x$, hence $\wh x\in\mc C_E$.
\end{proof}

The rest of this section is devoted to proving part (ii) of Theorem \ref{t12}: if $x\in\mc B(\mc H)\sm\mc K(\mc H)$, then there exists $T\in DS$ such that the sequence $\{A_n(T)(x)\}$ does not converge uniformly (Theorem \ref{t36} below).

We begin with a Dunford-Schwartz operator acting in the Banach space $(l^\ii,\|\cdot\|_\ii)$, that is, when a linear operator  $T: l^\ii\to l^\ii$ is such that $\|T(\xi)\|_1\leq \| \xi\|_1$ for all $\xi\in l^1$ and $\|T(\xi)\|_\ii\leq\|\xi\|_\ii$ for all $\xi\in l^\ii$ (writing $T\in DS$). In this case, we have a commutative version of Theorem \ref{t12}\,(ii) (cf. \cite[Theorem 3.3]{cl3}):

\begin{teo}\label{t33a}
If $\xi\in l^\ii\sm c_0$, then there exists $T\in DS$  such that the averages $A_n(T)(\xi)$ do not converge coordinate-wise, hence uniformly.
\end{teo}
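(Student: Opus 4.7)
The plan is to realize $T$ as a one-sided weighted ``chain'' that routes through coordinate $1$ a sign-oscillating sample of the large values of $\xi$. Since $\xi\notin c_0$, there exist $\al>0$ and a one-to-one sequence $(p_k)_{k\ge1}$ in $\mathbb N\sm\{1\}$ with $|\xi_{p_k}|>\al$; by Bolzano--Weierstrass, after passing to a subsequence we may assume $\xi_{p_k}\to\bt$ with $|\bt|\ge\al$. Set $p_0=1$, so that $(p_k)_{k\ge0}$ is injective in $\mathbb N$.

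Next, I would fix a sign sequence $\{\ep_k\}_{k\ge 0}\su\{-1,+1\}$ with $\ep_0=1$ (to be chosen), let $c_k:=\ep_k\ep_{k+1}\in\{-1,+1\}$, and define
\[
(T\eta)_i=\begin{cases} c_k\,\eta_{p_{k+1}}, & i=p_k, \\ 0, & i\notin\{p_k:k\ge 0\}. \end{cases}
\]
Because the $p_k$'s are distinct, each row and each column of the matrix of $T$ contains at most one non-zero entry, of modulus $1$; hence $T$ is contractive on both $(l^1,\|\cdot\|_1)$ and $(l^\ii,\|\cdot\|_\ii)$, so $T\in DS$. A one-line induction using the telescoping identity $(\ep_0\ep_1)(\ep_1\ep_2)\cdots(\ep_{k-1}\ep_k)=\ep_k$ yields $(T^k\xi)_1=\ep_k\,\xi_{p_k}$ for every $k\ge0$, whence
\[
(A_n(T)(\xi))_1=\frac{1}{n+1}\sum_{k=0}^{n}\ep_k\,\xi_{p_k}.
\]
To break convergence of this Ces\`aro sum, I would pick a block sequence $0=L_0<L_1<L_2<\cdots$ with $L_{j+1}/L_j\to\ii$ (say $L_j=2^{j^2}$) and set $\ep_k=(-1)^j$ for $k\in[L_j,L_{j+1})$. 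Along the subsequence $n=L_{j+1}-1$, the terminal block has length $L_{j+1}-L_j\sim L_{j+1}$, on which $\ep_k\xi_{p_k}\to(-1)^j\bt$, whereas the combined length of the earlier blocks is $L_j=o(L_{j+1})$, contributing $o(1)$ to the normalized sum. Therefore $(A_{L_{j+1}-1}(T)(\xi))_1\to(-1)^j\bt$; since $\bt\ne 0$, the parities of $j$ produce the distinct limits $\bt$ and $-\bt$, and the averages fail to converge at coordinate $1$, which proves the claim.

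The main obstacle is conceptual rather than technical: one must look past the usual picture of a unital stochastic $T$ and exploit that the Dunford--Schwartz class on $l^\ii$ admits arbitrary ``path'' actions with unimodular weights and zero entries elsewhere. Once this flexibility is recognized, the Bolzano--Weierstrass extraction combined with the fast-growing sign-block pattern is the classical device for producing oscillating Ces\`aro means, and the verification reduces to the two routine estimates above.
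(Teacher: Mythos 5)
Your construction is correct and is essentially the paper's own argument: both realize $T$ as a $\pm1$-weighted shift along a sequence of coordinates on which $|\xi|$ stays bounded away from zero, so that the iterates at a fixed base coordinate sample $\xi$ with signs grouped in blocks whose rapidly growing lengths force the Ces\`aro averages to oscillate between two distinct values. The only differences are cosmetic: you handle complex $\xi$ directly via Bolzano--Weierstrass and prescribe the sign blocks in advance ($L_j=2^{j^2}$), whereas the paper first reduces to the positive real part (tracking the extra $\frac1{n+1}\xi_-$ term) and chooses the block endpoints $n_1<n_2<\cdots$ inductively.
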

\begin{proof}
Let $\{\xi_n\}_{n = 1}^\ii\in l^\ii_h\sm c_0$. If $\xi=\xi_+ - \xi_-$, then either $\xi_+=\{(\xi_+)_n\}_{n=1}^\ii\in l^\ii_h\sm c_0$ or $\xi_-\in l^\ii_h\sm c_0$, so let us assume the former. In addition, we may assume that $\lim\limits_{n\to \ii}((\xi_+)^*)_n =1$. It is clear that the set $\{n\in\mathbb N: (\xi_+)_n \geq 1\}$ is infinite. In addition, the set
\[
G = \{m \in \mathbb N : 1 \leq (\xi_+)_m \leq 2\}= \{m_1 < m_2<\dotsc < m_s<\dots \}
\]
is also infinite.

Let $1=n_0,n_1, n_2 \dots$ be an increasing sequence of positive integers. Define the function $\varphi:\mathbb N \to \mathbb R$ by
$$
\varphi(m)=\chi_{\{m_{n_0}\}}+\sum_{k=0}^\ii\left(\chi_{\{m_{n_k+1},m_{n_k+2}, ...,m_{n_{k+1}-1}\}}(m)-
\chi_{\{m_{n_{k+1}}\}}(m)\right) \text{ \ if \ } m\in G;
$$
$$
\varphi(m)=0 \text{ \ if \ } m\notin G.
$$
Then we have
$$
\varphi(m_1)=1, \ \varphi(m_2)=1, \ \ldots, \ \varphi(m_{{n_1-1}})=1, \ \varphi(m_{n_1})=-1,
$$
$$
\varphi(m_{{n_1}+1})=1, \ \varphi(m_{{n_1}+2})=1, \  \ldots, \ \varphi(m_{n_2-1})=1,\ \varphi(m_{n_2})=-1,
$$
$$
\varphi(m_{{n_2}+1})=1, \ \varphi(m_{{n_2}+2})=1, \ \ldots, \ \varphi(m_{n_3-1})=1,\ \varphi(m_{n_3})=-1, \ \ \ldots
$$
Let $\pi:\mathbb N\to\mathbb N$ be given by
$$
\pi(m_i) = m_{i+1}\text{ \ if \ }m_i\in G\text{ \ \ and \ \ }\pi(m)=m \text{ \ if \ } m\notin G.
$$
Define a linear operator $T:l^\ii\to l^\ii$ by
$$
T(\{\eta_n\}_{n=1}^\ii)=\{\varphi(n)\,\eta_{\pi(n)}\}_{n=1}^\ii, \ \ \{\eta_n\}_{n=1}^\ii\in l^\ii.
$$
Then, clearly, $ T\in DS$.

Since
$$
T^k(\xi_+)_m=\varphi(m)\varphi(\pi(m))\varphi(\pi^2(m))\ldots \varphi(\pi^{k-1}(m))\,(\xi_+)_{\pi^k(m)}
$$
for all $k, m\in\mathbb N$,  it follows that
\[
\begin{split}
A_{n_1-1}(T)(\xi_+)_{m_1}&=\frac 1{n_1}\sum\limits_{k=0}^{n_1-1}T^k(\xi_+)_{m_1}\\
&=\frac1{n_1}\bigg((\xi_+)_{m_1}+\sum\limits_{k=1}^{n_1-1} \varphi(m_1)\varphi(m_2)\ldots
\varphi(m_k)\,(\xi_+)_{m_{k+1}}\bigg)\\
&=\frac 1 {n_1} \sum\limits_{k=0}^{n_1-1} (\xi_+)_{m_{k+1}}\ge 1> \frac 12.
\end{split}
\]
Further, since
\[
\begin{split}
A_{n_2-1}(T)(\xi_+)_{m_1}&=\frac 1{n_2}\left(\sum\limits_{k=0}^{n_1-1}(\xi_+)_{m_{k+1}}-\sum\limits_{k=n_1}^{n_2-1}(\xi_+)_{m_{k+1}}\right)\\
&\leq \frac 1{n_2-1} ( 2 n_1-(n_2-n_1-1)),
\end{split}
\]
there exists such $n_2>n_1$ that
\[
A_{n_2-1}(T)(\xi_+)_{m_1}<-\frac12.
\]
As
\[
A_{n_3-1}(T)(\xi_+)_{m_1}=\frac 1{n_3}\left(\sum\limits_{k=0}^{n_1-1}(\xi_+)_{m_{k+1}}-\sum\limits_{k=n_1}^{n_2-1}(\xi_+)_{m_{k+1}}+\sum\limits_{k=n_2}^{n_3-1}(\xi_+)_{m_{k+1}}\right),
\]
one can find $n_3>n_2$ for which
\[
A_{n_3-1}(T)(\xi_+)_{m_1}>\frac12.
\]
Continuing this procedure, we choose $n_1<n_2<n_3<\dots$ to satisfy the inequalities
\[
A_{n_{2k-1}-1}(T)(\xi_+)_{m_1}>\frac 12 \text{ \ \ and \ \ } A_{n_{2k}-1}(T)(\xi_+)_{m_1}<-\frac 12, \ \ k=1,2,\dots,
\]
implying that $\{A_n(T)(\xi_+)_{m_1}\}$ is a divergent sequence.

Finally, note that $T(\xi_-)=0$, which implies that
\[
 A_n(T)(\xi)= A_n(T)(\xi_+)-A_n(T)(\xi_-)=A_n(T)(\xi_+)-\frac1{n+1}\xi_-,
\]
so, the sequence $\{A_n(T)(\xi)\}$ does not converge coordinate-wise, hence uniformly.

If $\xi\in l^\ii\sm c_0$, then
\[
\xi=\operatorname{Re}\xi +i \operatorname{Im}\xi,\text{ \ where\ }\operatorname{Re}\xi=\frac{\xi + \overline{\xi}}{2},\ \operatorname{Im}\xi = \frac{\xi- \overline{\xi}}{2i} \in l_h^\ii.
\]
As shown above, there exists $T \in DS$ such that the sequence $\{ A_n(T)(\operatorname{Re}\xi)\}$ does not converge coordinate-wise. Then the sequence $\{ A_n(T)(\xi ) \}$ also does not converge coordinate-wise, hence uniformly.
\end{proof}

Now we need a statement on the existence of {\it conditional expectation} in a von Neumann algebra $\mc B(\mc H)$ (see, for example, \cite{ta}).

\begin{teo}\label{t35}
Let $\mc N$ be a  von Neumann subalgebra in $\mc B(\mc H)$ such that the restriction of the trace $\tau$ on $\mc N$  is a semifinite trace. Then there exists a unique linear map  $U: \mc B(\mc H)\to\mc N$ (conditional expectation on $\mc N$), having the following properties:
\begin{enumerate}[(i)]
\item
$\tau(x)=\tau(U(x))$ for all $x\in\mc C^1$;
\item
$U(x)=x$ for all $x \in \mc N$;
\item
$U \in DS_+$; moreover, $\|U\|_{\mc B(\mc H) \to \mc B(\mc H)} = 1$ and $\|U\|_{\mc C^1 \to \mc C^1} = 1$.
\end{enumerate}
\end{teo}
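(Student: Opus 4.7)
The plan is to obtain $U$ as the classical trace-preserving conditional expectation of Umegaki--Takesaki, whose existence and uniqueness under our hypotheses is exactly the content of the reference \cite{ta}: since $\tau|_{\mc N}$ is faithful, normal and semifinite on $\mc N$, we are precisely in the setting of that theorem. To make properties (i)--(iii) transparent, I would also sketch the construction by first building $U$ on the Hilbert--Schmidt ideal via orthogonal projection and then extending to $\mc C^1$ and $\mc B(\mc H)$.

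First, equip $\mc C^2$ with the inner product $\langle x, y\rangle=\tau(y^*x)$. The closed subspace $\mc C^2\cap\mc N$ endowed with the induced norm can be identified with $L^2(\mc N,\tau|_{\mc N})$; closedness uses the faithfulness and normality of $\tau|_{\mc N}$. Let $U_2:\mc C^2\to\mc C^2$ be the orthogonal projection onto this subspace. Using that $\mc C^2\cap\mc N$ is an $\mc N$-bimodule and that $\tau$ is tracial, one checks in the standard way that $U_2$ commutes with left and right multiplication by elements of $\mc N$, sends positive elements to positive elements, and satisfies $\tau(U_2(x))=\tau(x)$ on $\mc C^1\cap\mc C^2$.

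Next, extend $U_2$ off $\mc C^2$. For $x\in\mc C^1\cap\mc C^2$ and $z\in\mc N$ with $\|z\|_\ii\le 1$, the $\mc N$-bilinearity of $U_2$ gives
\[
\tau(U_2(x)\,z)=\tau(x\cdot U_2(z))=\tau(xz),
\]
so the duality $\|y\|_1=\sup_{\|z\|_\ii\le 1}|\tau(yz)|$ yields $\|U_2(x)\|_1\le\|x\|_1$. Since $\mc F(\mc H)\su\mc C^1\cap\mc C^2$ is dense in $\mc C^1$, this extends uniquely to a contraction $U_1$ on $\mc C^1$, and its Banach-space adjoint under the duality $\mc B(\mc H)=(\mc C^1)^*$ gives a $\|\cdot\|_\ii$-contraction $U$ on $\mc B(\mc H)$; compatibility with $U_2$ on $\mc C^2$ follows from density. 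Properties (i) and (ii) are built into the construction.

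The main work will be the verifications in (iii): positivity on all of $\mc B(\mc H)$ and the two norm equalities. For positivity, given $x\ge 0$ in $\mc B(\mc H)$ and any $\tau|_{\mc N}$-finite projection $e\in\mc N$, write $\tau(eU(x)e)=\tau(U(exe))=\tau(exe)\ge 0$; semifiniteness of $\tau|_{\mc N}$ and normality of $U$ then force $U(x)\ge 0$. The norm identities $\|U\|_{\mc B(\mc H)\to\mc B(\mc H)}=1$ and $\|U\|_{\mc C^1\to\mc C^1}=1$ are attained at $U(\mathbf 1)=\mathbf 1$ and at any $\tau$-finite projection $e\in\mc N$ with $\|U(e)\|_1=\tau(e)=\|e\|_1$ respectively. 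Uniqueness is immediate: any linear map with properties (i) and (ii) must agree with $U_2$ on the dense subspace $\mc C^1\cap\mc C^2$, hence with $U$ everywhere after extension.
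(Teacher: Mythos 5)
The paper itself does not prove this theorem: it is quoted as a known result with the reference \cite{ta}, so your decision to rest existence and uniqueness on Takesaki's theorem is exactly what the authors do, and your added sketch (orthogonal projection $U_2$ of $\mc C^2$ onto $\mc N\cap\mc C^2$, contraction to $\mc C^1$ by duality, adjoint extension to $\mc B(\mc H)$) is the standard Umegaki--Takesaki construction. Two small technical points in the sketch would need repair if it were written out: the identity $\tau(U_2(x)z)=\tau(x\,U_2(z))$ is not meaningful for arbitrary $z\in\mc N$, since $U_2(z)$ is only defined for $z\in\mc C^2$ (one should take $z\in\mc N\cap\mc C^2$, or approximate $z$ by $ze_i$ with $\tau$-finite projections $e_i\uparrow\mathbf 1$ in $\mc N$); and after taking the Banach adjoint under $\mc B(\mc H)=(\mc C^1)^*$ one still has to verify that the range of $U$ lies in $\mc N$ (this uses $L^1(\mc N,\tau|_{\mc N})^*=\mc N$), which you assert implicitly. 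Also, your positivity argument invokes ``normality of $U$,'' which has not been established; it is cleaner to note that $U(x)$ is self-adjoint and that if some spectral projection $q=\{U(x)<-\ep\}$ were nonzero, a nonzero $\tau$-finite subprojection $e\le q$ in $\mc N$ would give $\tau(eU(x)e)\le-\ep\tau(e)<0$, contradicting $\tau(eU(x)e)=\tau(exe)\ge0$.

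The genuine gap is the uniqueness paragraph. Properties (i) and (ii) alone do \emph{not} force a linear map to agree with $U_2$ on $\mc C^1\cap\mc C^2$, because they contain no $\mc N$-module (or positivity/contractivity) information. Concretely, let $\mc N$ be the atomic maximal abelian subalgebra of diagonal operators with respect to a basis $\{\varphi_n\}$, let $D(x)$ denote the diagonal part of $x$, let $p_n$ be the projection onto $\mathbb C\varphi_n$, and set $V(x)=D(x)+(x\varphi_2,\varphi_1)\,(p_1-p_2)$. Then $V$ maps $\mc B(\mc H)$ into $\mc N$, $V(y)=y$ for $y\in\mc N$, and $\tau(V(x))=\tau(D(x))=\tau(x)$ for $x\in\mc C^1$ because $\tau(p_1-p_2)=0$; yet $V\neq D$. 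So the step ``any linear map with properties (i) and (ii) must agree with $U_2$'' fails. Uniqueness really uses (iii): a norm-one projection of $\mc B(\mc H)$ onto the von Neumann subalgebra $\mc N$ is automatically an $\mc N$-bimodule map (Tomiyama), and then for $x\in\mc B(\mc H)$ and $z\in\mc N\cap\mc C^1$ one gets $\tau(U(x)z)=\tau(U(xz))=\tau(xz)$, which determines $U(x)$ since $\tau|_{\mc N}$ is semifinite and the functionals $\tau(\cdot\,z)$, $z\in\mc N\cap\mc C^1$, separate points of $\mc N$. Since you are in any case appealing to \cite{ta}, the simplest fix is to attribute uniqueness to that theorem as well, or to run the Tomiyama argument just indicated rather than deriving uniqueness from (i)--(ii) alone.
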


Assume first that $(\mc H, (\cdot, \cdot))$  is a separable  infinite-dimensional  complex Hilbert space.  Fix an orthonormal basis $\{\varphi_n\}_{n\in \mathbb N}$ in  $\mc H$.  Let $p_n$ be the one-dimensional projection on the linear subspace $\mathbb C\cdot\varphi_n\su\mc H$. It is clear that $p_m p_n = 0$ for all $m,n\in\mathbb N$, $n \neq m$.

For any $\xi=\{ \xi_n \}_{n = 1}^\ii\in  l^\ii$ and $h=\sum\limits_{n=1}^\ii(h,\varphi_n)\varphi_n\in\mc H$ we set
$$
x_\xi(h) = \sum\limits_{n=1}^\ii \xi_n(h,\varphi_n)\varphi_n = \sum\limits_{n=1}^\ii \xi_n p_n (h).
$$
It is clear that $x_\xi\in\mc B(\mc H)$ and $x_\xi= (wo)-\sum\limits_{n=1}^\ii\xi_n p_n$, where  $(wo)$ stands for the weak operator topology. In addition,
\[
\mc N=\{ x_\xi\in\mc B(\mc H):\ \xi=\{ \xi_n \}_{n = 1}^\ii\in l^\ii\}
\]
is the smallest commutative von Neumann subalgebra  in $\mc B(\mc H)$ containing all projections $p_n$. Besides, the restriction of the trace $\tau$ on $\mc{N}$  is a semifinite trace.

Define the linear map $\Phi: (\mc N,\|\cdot\|_\ii)\to ( l^\ii,\|\cdot\|_\ii)$ by setting $\Phi(x_\xi)=\xi$.

\begin{pro}\label{p33}
$\Phi$ is a positive linear surjective isometry.
\end{pro}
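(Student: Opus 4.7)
My approach exploits the fact that every operator in $\mc N$ is diagonal with respect to the fixed basis $\{\varphi_n\}$, so that the correspondence $\xi\leftrightarrow x_\xi$ is essentially just a coordinatization. The whole proposition should then reduce to a chain of direct verifications against the definitions, with no conceptual obstacle to overcome.

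First I would check that $\Phi$ is well defined, linear, and bijective. The crucial observation is that $x_\xi(\varphi_n)=\xi_n\varphi_n$ for each $n$, so $\xi_n=(x_\xi\varphi_n,\varphi_n)$ recovers the sequence uniquely from the operator. This immediately gives injectivity, while surjectivity is built into the very definition of $\mc N$. Linearity is clear from $x_{\al\xi+\bt\eta}=\al x_\xi+\bt x_\eta$, which in turn follows from the pointwise formula $x_\xi(h)=\sum_n\xi_n(h,\varphi_n)\varphi_n$.

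Next I would verify the isometry property by a direct Parseval-type estimate: for $h=\sum_n(h,\varphi_n)\varphi_n\in\mc H$ with $\|h\|^2=\sum_n|(h,\varphi_n)|^2$, orthogonality of $\{\varphi_n\}$ gives
\[
\|x_\xi h\|^2=\sum_{n=1}^\ii|\xi_n|^2|(h,\varphi_n)|^2\leq\|\xi\|_\ii^2\,\|h\|^2,
\]
so $\|x_\xi\|_\ii\leq\|\xi\|_\ii$; taking $h=\varphi_n$ yields $\|x_\xi\|_\ii\geq|\xi_n|$ for every $n$, hence the reverse inequality.

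Finally, for positivity I would first compute the matrix elements $(x_\xi\varphi_n,\varphi_m)=\xi_n\delta_{nm}$ to see that $x_\xi^*=x_{\ol\xi}$; consequently the self-adjoint elements of $\mc N$ correspond to real-valued sequences. For such $\xi$, the identity $(x_\xi h,h)=\sum_n\xi_n|(h,\varphi_n)|^2$ shows that $x_\xi\geq 0$ in $\mc B(\mc H)$ is equivalent to $\xi_n\geq 0$ for all $n$ (one direction by inspection, the other by testing with $h=\varphi_n$). This gives $\Phi(\mc N_+)=(l^\ii)_+$ and, combined with the preceding steps, completes the proof. The only step requiring any care is the well-definedness; all the remaining items are direct consequences of the diagonal structure.
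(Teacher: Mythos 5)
Your proof is correct and takes essentially the same route as the paper: surjectivity is immediate from the definition of $\mc N$, positivity is read off by testing against the basis vectors, and the only substantive point, the equality $\|x_\xi\|_\ii=\|\xi\|_\ii$, which the paper simply cites from the literature, you verify directly by the standard Parseval computation for diagonal operators. Nothing is missing; your version is just more self-contained on the isometry step.
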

\begin{proof}
By definition of $\Phi$, we have $\Phi(\mc N) = l^\ii$.
Using \cite[Ch.\,1, \S\,1.1, E.\,1.1.11]{lsz}, we see that
\[
\|x_\xi\|_\ii=\|\xi\|_\ii=\|\Phi(x_\xi)\|_\ii,
\]
that is, $\Phi$ is a linear surjective isometry.

Since $\xi=\{ \xi_n \}_{n = 1}^\ii\ge 0$ whenever $x_\xi \in \mc N_+$, the map $\Phi$ is positive.
\end{proof}

Let $(E,\|\cdot\|_E)\su c_0$ be a symmetric sequence space, and let $\mc N_E=\mc N\cap\mc C_E$. If $x_\xi= \sum\limits_{n=1}^\ii \xi_n p_n\in\mc N_E$, then $\{s_n(x_\xi)\}_{n=1}^\ii=\{ \xi_n^* \}\in E$, hence $\{\xi_n \}\in E$. In addition,
\[
\|x_\xi\|_{\mc C_E}=\|\{ \xi_n^* \}\|_E=\|\{ \xi_n \}\|_E.
\]
Therefore, we have the following.
\begin{pro}\label{p34}
If $(E,\|\cdot\|_E)\su c_0$ is a symmetric sequence space, then the restriction $\Phi|_{\mc N_E}: (\mc{N}_E,\|\cdot\|_{\mc C_E}) \to (E,\|\cdot\|_E)$ is a positive linear surjective isometry (we denote this restriction also by $\Phi$).
\end{pro}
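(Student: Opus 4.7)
The plan is to leverage Proposition \ref{p33}, which already establishes that $\Phi:(\mc N,\|\cdot\|_\ii)\to(l^\ii,\|\cdot\|_\ii)$ is a positive linear surjective isometry. Restriction automatically preserves linearity and positivity, so the only tasks left are: (a) identify the image $\Phi(\mc N_E)$ as $E$, and (b) verify the norm identity $\|x_\xi\|_{\mc C_E}=\|\xi\|_E$ on $\mc N_E$. The remarks just preceding the proposition already carry out most of this verification, so the proof will essentially amount to assembling those observations.

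The crucial computation is identifying the singular values of $x_\xi=\sum_{n=1}^\ii\xi_n p_n$. Since the $p_n$ are mutually orthogonal one-dimensional projections onto basis vectors, $x_\xi$ is diagonal in $\{\varphi_n\}$, so $|x_\xi|=\sum_{n=1}^\ii|\xi_n|p_n$. The eigenvalues of this operator, arranged in decreasing order, are exactly $\{\xi_n^*\}_{n=1}^\ii$, giving $\{s_n(x_\xi)\}=\{\xi_n^*\}$. Note also that $x_\xi\in\mc K(\mc H)$ iff $\xi\in c_0$.

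From here surjectivity is immediate. If $x_\xi\in\mc N_E$, then $\{s_n(x_\xi)\}=\{\xi_n^*\}\in E$, and the symmetric property of $E$ forces $\{\xi_n\}\in E$; conversely, any $\xi\in E\su c_0$ produces $x_\xi\in\mc K(\mc H)$ with $\{s_n(x_\xi)\}=\{\xi_n^*\}\in E$, hence $x_\xi\in\mc N_E$. Combined with Proposition \ref{p33}, this yields $\Phi(\mc N_E)=E$. The isometry property follows from the chain
\[
\|x_\xi\|_{\mc C_E}=\|\{s_n(x_\xi)\}\|_E=\|\{\xi_n^*\}\|_E=\|\{\xi_n\}\|_E=\|\Phi(x_\xi)\|_E,
\]
where the third equality uses $\|\eta\|_E=\|\eta^*\|_E$ recorded in the preliminaries. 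Positivity of the restriction is inherited directly from $\Phi$.

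There is no genuine obstacle here; the entire argument is bookkeeping once one observes that $x_\xi$ is diagonal in the fixed basis. The proof can therefore be presented as a short two-line deduction from Proposition \ref{p33} together with the singular-value identification $\{s_n(x_\xi)\}=\{\xi_n^*\}$.
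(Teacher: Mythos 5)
Your proposal is correct and follows essentially the same route as the paper, which justifies the proposition by the remarks immediately preceding it: the identification $\{s_n(x_\xi)\}=\{\xi_n^*\}$, the resulting equality $\|x_\xi\|_{\mc C_E}=\|\{\xi_n^*\}\|_E=\|\{\xi_n\}\|_E$, and inheritance of linearity, positivity, and surjectivity from Proposition \ref{p33}. Your explicit check of both inclusions for $\Phi(\mc N_E)=E$ is a minor elaboration of what the paper leaves implicit, not a different argument.
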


\begin{teo}\label{t36}
If $x\in\mc B(\mc H)\sm\mc K(\mc H)$, then there exists $T \in DS$ such that the sequence $\{A_n(T)(x)\}$ does not converge uniformly.
\end{teo}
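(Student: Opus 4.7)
The plan is to reduce to the commutative setting of Theorem \ref{t33a} via the conditional expectation $U$ of Theorem \ref{t35} and the isometry $\Phi$ of Propositions \ref{p33}--\ref{p34}. I would work in the separable setting (the non-separable case reduces to it by restricting to a separable subspace witnessing the non-compactness of $x$).

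Step 1 is to choose an orthonormal basis of $\mc H$ in which the diagonal of $x$ lies in $l^\ii \sm c_0$. Writing $x = a + ib$ with $a = (x+x^*)/2$ and $b = (x-x^*)/(2i)$ self-adjoint, at least one of them, say $a$, is not compact. Decomposing $a = a_+ - a_-$, one of $a_\pm$, say $a_+$, is not compact, so by Remark \ref{r1} there is $\lb > 0$ for which $e = \{a_+ > \lb\}$ is infinite-dimensional. Select an orthonormal basis $\{\varphi_n\}_{n=1}^\ii$ of $\mc H$ whose initial segment is an ONB of $e(\mc H)$. Orthogonality of the supports of $a_+$ and $a_-$ gives $(a\varphi_n,\varphi_n) = (a_+\varphi_n,\varphi_n) \ge \lb$ for those $n$, whence $\xi := \{(x\varphi_n,\varphi_n)\}_{n=1}^\ii$ satisfies $|\xi_n| \ge \lb$ along an infinite set, i.e.\ $\xi \in l^\ii \sm c_0$.

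Form the masa $\mc N$ and the maps $\Phi: \mc N \to l^\ii$, $U: \mc B(\mc H) \to \mc N$ associated with this basis, so that $\Phi(U(x)) = \xi$. Invoke Theorem \ref{t33a} to obtain $T_0 \in DS$ on $l^\ii$ for which $\{A_n(T_0)(\xi)\}$ does not converge coordinate-wise. Set
\[
T = \Phi^{-1} \circ T_0 \circ \Phi \circ U : \mc B(\mc H) \to \mc B(\mc H).
\]
That $T \in DS$ follows from composing contractions: $U$ is contractive on both $\mc B(\mc H)$ and $\mc C^1$ by Theorem \ref{t35}(iii), $\Phi$ is an isometry from $\mc N$ onto $l^\ii$ and, applied with $E = l^1$, from $\mc N \cap \mc C^1$ onto $l^1$ by Propositions \ref{p33} and \ref{p34}, and $T_0 \in DS$ on $l^\ii$.

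Since $T(\mc B(\mc H)) \su \mc N$ and $U|_{\mc N} = \mathrm{id}$, induction gives $T^k(x) = \Phi^{-1}(T_0^k(\xi))$ for every $k \ge 1$, and averaging yields the intertwining identity $(\Phi \circ U)(A_n(T)(x)) = A_n(T_0)(\xi)$. Were $\{A_n(T)(x)\}$ uniformly convergent, the $\|\cdot\|_\ii$-contractivity of $\Phi \circ U$ would force $\{A_n(T_0)(\xi)\}$ to converge in $(l^\ii, \|\cdot\|_\ii)$, hence coordinate-wise, contradicting the choice of $T_0$. The main hurdle is Step 1---producing a basis along which the diagonal of $x$ is bounded below in modulus on an infinite subset---after which the intertwining and the contraction properties of $T$ are routine verifications.
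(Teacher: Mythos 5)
Your proposal is correct, and it runs on the same core machinery as the paper: the conditional expectation $U$ of Theorem \ref{t35}, the isometry $\Phi$ of Propositions \ref{p33}--\ref{p34}, the commutative Theorem \ref{t33a}, and the operator $T=\Phi^{-1}S\Phi U$. Where you genuinely differ is in the reduction to the commutative statement. The paper first treats $0\le x$ (positivity of $U$ gives $U(x)\ge\lb p_{n_i}$, hence $\Phi(U(x))\in l^\ii\sm c_0$), then handles non-separable $\mc H$ by compressing with $p=\sup p_{j_n}$ and setting $D(y)=D_0(pyp)$, then self-adjoint $x$ via $x=x_+-x_-$ and the support projection, and finally general $x$ via real and imaginary parts. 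You collapse this case analysis by choosing the basis inside the infinite-dimensional spectral projection $e=\{(\operatorname{Re}x)_+>\lb\}$ (or whichever of the four positive parts is non-compact), so that the diagonal $\{(x\varphi_n,\varphi_n)\}$ of the original $x$ is already bounded below in modulus along an infinite set; together with your exact intertwining $(\Phi U)(A_n(T)(x))=A_n(S)(\xi)$ (the paper instead compares $A_n(T)(x)$ with $A_n(T)(U(x))$, which differ by $\frac1{n+1}(x-U(x))$), this settles the separable case for arbitrary $x$ in one stroke. The price is that you must justify $\Phi(U(x))=\{(x\varphi_n,\varphi_n)\}_{n\ge1}$: this is true, but it uses the uniqueness clause of Theorem \ref{t35} together with the observation that the diagonal map $z\mapsto (wo)\text{-}\sum_n p_nzp_n$ satisfies properties (i)--(iii), hence coincides with $U$; state this explicitly.

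Two smaller repairs. First, when $e(\mc H)^\perp$ is infinite-dimensional you cannot make an orthonormal basis of $e(\mc H)$ an ``initial segment'' of a basis of $\mc H$; what you need, and what your argument actually uses, is only that infinitely many basis vectors lie in $e(\mc H)$ (the paper's $e_\lb\ge p_{n_i}$). Second, the non-separable case, which you dismiss in one line, does require the small extension argument the paper supplies: with $p$ the projection onto the closed span of a countable orthonormal family inside $e(\mc H)$, the compression $z=pxp$ has $\operatorname{Re}z\ge\lb p$, hence $z\in\mc B(p\mc H)\sm\mc K(p\mc H)$; one then needs to pass from a Dunford--Schwartz operator $D_0$ on $\mc B(p\mc H)$ back to one on $\mc B(\mc H)$, via $D(y)=D_0(pyp)$, and to note that $A_n(D)(x)-A_n(D_0)(z)=\frac1{n+1}(x-z)\to0$ uniformly, so divergence transfers. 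Neither point is a flaw in the idea, but both should be written out.
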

\begin{proof}
Assume first that $x\ge 0$ and $\mc H$ is separable. Since $x \notin \mc K(\mc H)$, it follows that there exists a spectral projection $e_\lb=\{x >\lb\}$, $\lb>0$, such that $\tau(e_\lb)=\ii$. Choose an orthonormal basis $\{\varphi_n\}_{n=1}^\ii$ in $\mc H$ such that $e_\lb\ge p_{n_i}$ for some sequence $\{n_i\}_{i=1}^\ii$, where $p_n$ is the one-dimensional projection on the subspace $\mathbb C\cdot\varphi_n\su\mc H$.

Let $\mc N=\{x_\xi\in\mc B(\mc H):\  \xi=\{ \xi_n \}_{n = 1}^\ii\in  l_\ii\}$ be the smallest commutative von Neumann subalgebra in $\mc B(\mc H)$ containing all projections $p_n$. By virtue of Theorem \ref{t35}, there exists a conditional expectation $U: \mc B(\mc H)\to\mc N$ such that
\[
0\leq y=U(x)\ge U(\lb\, e_\lb)\ge\lb\, U(p_{n_i})=\lb p_{n_i}\text{ \ \ for all\ \ } i\in\mathbb N.
\]
Consequently, $y\notin\mc R_\tau$ and $y = x_\xi \in \mc N$, where $0\leq \xi=\{ \xi_n \}_{n = 1}^\ii\in l^\ii\sm c_0$. Besides, by definition of $\Phi$, we have $\Phi(y)=\xi$.

Next, by Theorem \ref{t33a}, there exists an operator $S: l^\ii\to l^\ii$, $S\in DS$, such that the sequence $\{A_n(S)(\xi)\}$ does not converge uniformly. Consider the operator
\[
T=\Phi^{-1} S \Phi U:\, \mc B(\mc H)\to\mc N\su\mc B(\mc H).
\]
It is clear that $T\in DS$. As $y=U(x)\in \mc N$, hence $U(y)=y$ (see Theorem \ref{t35}\,(ii)), and
$U\Phi^{-1}=\Phi^{-1}$, we have $T^k(y)=\Phi^{-1} S^k \Phi(y)$ for each $k\in\mathbb N$.

Since $\Phi^{-1}$ is an isometry and
\[
A_n(T)(y)=\frac1{n+1}\sum_{k=0}^nT^k(y)=\Phi^{-1}\left(\frac1{n+1}\sum_{k=0}^nS^k\Phi(y)\right)
=\Phi^{-1}(A_n(S)(\xi)),
\]
for all $n \in \mathbb{N}$, it follows that the sequence $\{A_n(T)(y)\}_{n=1}^\ii$ does not converge uniformly.

Now, as above, $y=U(x)\in\mc N$ entails $T^k(x)=\Phi^{-1}S^k\Phi(y)=T^k(y)$ for all $k\in \mathbb N$. Therefore, we have
\[
A_n(T)(x)-A_n(T)(y)=\frac1{n+1}(x-y),
\]
and it follows that the sequence $\{A_n(T)(x)\}_{n=1}^\ii$ also does not converge uniformly.

Let now $\mc H$ be non-separable, and let $0\leq x\in\mc B(\mc H)\sm\mc K(\mc H)$. Since $x \notin \mc K(\mc H)$ it follows that there exists a spectral projection $e_{\lb}=\{x >\lb\}$, $\lb> 0$, such that $\tau(e_\lb)=\ii$. Choose an orthonormal basis $\{\varphi_j\}_{j \in J}$  in  $\mc H$ such that $e_{\lb} \geq p_{j_n}$ for some sequence $\{j_n\}_{n=1}^\ii$, where $p_j$ is the one-dimensional projection on the subspace $\mathbb C\cdot\varphi_j\su\mc H$.
If $p=\sup\limits_{n\in\mathbb N}p_{j_n}$, then $\mc H_0=p(\mc H)$ is a separable infinite-dimensional Hilbert subspace in $\mc H$ such that $\mc K(\mc H_0)=p\mc K(\mc H)p$.

Since $z=pxp\in\mc B_+(\mc H_0)$ and $z\ge\lb pe_\lb p\ge\lb p$, it follows that  $z\in\mc B_+(\mc H_0)\sm\mc K(\mc H_0)$. In view of the above, there exists a Dunford-Schwartz operator $D_0:\mc B(\mc H_0)\to\mc B(\mc H_0)$ such that the sequence $\{A_n(D_0)(z)\}_{n=1}^\ii$  does not converge uniformly.

It is clear that $D(y)=D_0(pyp)$, $y\in\mc B(\mc H)$, is a Dunford-Schwartz operator in $\mc B(\mc H)$ such that $D^k(x) = D_0^k(z)$ for each $k\in\mathbb N$. Then
\[
A_n(D)(x)-A_n(D_0)(z)=\frac1{n+1} (x-z),
\]
and we conclude that the sequence $\{A_n(D)(x)\}_{n=1}^\ii$ does not converge uniformly.

Further, let $x\in\mc B(\mc H)_h\sm\mc K(\mc H)$. Then $x = x_+ - x_-$ such that $x_+, x_- \in\mc B_+(\mc H)$ and $x_+\, x_- =0$. It is clear that either $x_+\in\mc B_+(\mc H)\sm\mc K(\mc H)$ or $x_-\in\mc B_+(\mc H)\sm\mc K(\mc H)$. Suppose that $x_+\in\mc B_+(\mc H)\sm \mc K(\mc H)$ and let $q=\mathbf s(x_+)$ be the support of $x_+$. If $\mc L= q(\mc H)$, then, by the above, there exists a Dunford-Schwartz operator $S_0:\mc B(\mc L)\to \mc B(\mc L)$ such that the sequence $\{A_n(S_0)(x_+)\}_{n=1}^\ii$ does not converge uniformly. Consider the operator $S:\mc B(\mc H)\to\mc B(\mc H)$ given  by $S(y)=S_0(qyq)$, $y\in\mc B(\mc H)$. It is clear that $S \in DS$, $S(x)=S_0(qxq)= S_0(x_+)$, and $S^k(x)= S_0^k (x_+)$ for all $k\in\mathbb N$. Consequently, the sequence $\{A_n(S)(x)\}_{n=1}^\ii$ does not converge uniformly.

Finally, if $x\in\mc B(\mc H)\sm\mc K(\mc H)$ is arbitrary, then, repeating the ending of the proof of Theorem \ref{t33a}, we obtain that there exists $T\in DS$ such that the sequence $\{A_n(T)(x)\}$  does not converge uniformly.
\end{proof}

Let $X\su\mc B(\mc H)$ be a fully symmetric space. We will write $X\in (IET)$ if $X$ satisfies the following individual ergodic theorem: for any $x\in X$ and $T\in DS$ there exists $\wh x\in X$ such that  $\|A_n(T)(x)-\wh x\|_\ii \to 0$ as $n\to\ii$.
Theorems  \ref{t33} and \ref{t36} yield the following criterion.

\begin{teo}\label{t37} 
Let $X\su\mc B(\mc H)$ be a fully symmetric space. Then the following conditions are equivalent:
\begin{enumerate}[(i)]
\item
$X\in (IET)$;
\item
$X\su\mc K(\mc H)$.
\end{enumerate}
\end{teo}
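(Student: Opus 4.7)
The plan is to derive both directions directly from earlier results in the paper, with essentially no new machinery required.

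For the implication (ii) $\Rightarrow$ (i), I would mirror the argument given for Theorem \ref{t33}, replacing $\mc C_E$ throughout by the given fully symmetric subspace $X\su\mc K(\mc H)$. Fix $x\in X$ and $T\in DS$. Since $x\in\mc K(\mc H)$, Theorem \ref{t32} produces $\wh x\in\mc K(\mc H)$ with $\|A_n(T)(x)-\wh x\|_\ii\to 0$. Because $A_n(T)(x)\prec\prec x$ for every $n$ (as recorded at the end of Section 2), Proposition \ref{p1} on the passage of Hardy--Littlewood--Polya majorization through uniform limits yields $\wh x\prec\prec x$. Full symmetry of $X$ then places $\wh x$ in $X$ with $\|\wh x\|_X\leq\|x\|_X$, giving $X\in(IET)$.

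For (i) $\Rightarrow$ (ii), I would argue by contraposition. Suppose $X\not\su\mc K(\mc H)$. Remark \ref{r1} forces $X=\mc B(\mc H)$, which in particular contains non-compact operators (for instance, $\mathbf 1\in\mc B(\mc H)\sm\mc K(\mc H)$). Pick any $x\in X\sm\mc K(\mc H)$ and invoke Theorem \ref{t36} to obtain $T\in DS$ for which $\{A_n(T)(x)\}$ fails to converge uniformly; in particular there is no $\wh x\in X$ satisfying $\|A_n(T)(x)-\wh x\|_\ii\to 0$, so $X\notin(IET)$.

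I do not anticipate any substantial obstacle, since both implications are one-step applications of results already established. The only point worth emphasizing is the role of Remark \ref{r1}: its dichotomy, that a symmetric subspace of $\mc B(\mc H)$ is either all of $\mc B(\mc H)$ or contained in $\mc K(\mc H)$, is precisely what permits a single non-compact operator, supplied by Theorem \ref{t36}, to rule out (IET) for any fully symmetric space not contained in the compacts.
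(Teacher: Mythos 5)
Your proof is correct and follows essentially the same route as the paper, which obtains the theorem directly from Theorems \ref{t33} and \ref{t36}; your (ii) $\Rightarrow$ (i) simply reruns the argument of Theorem \ref{t33} (via Theorem \ref{t32} and Proposition \ref{p1}) for a general fully symmetric $X\su\mc K(\mc H)$, which is exactly what is needed. The appeal to Remark \ref{r1} in the other direction is harmless but superfluous, since $X\not\su\mc K(\mc H)$ already supplies some $x\in X\sm\mc K(\mc H)$ to which Theorem \ref{t36} applies.
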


\section{Mean ergodic theorem  in  fully symmetric ideals of compact operators}

In this section, our goal is to prove Theorem \ref{t14}. So, let $(E,\|\cdot\|_E)\su c_0$ be a fully symmetric sequence space, and let $(\mc C_E,\|\cdot\|_{\mc C_E})$ be a fully symmetric ideal generated by $(E,\|\cdot\|_E)$. We will write $\mc C_E \in (MET)$ if the ideal $(\mc C_E,\|\cdot\|_{\mc C_E})$ satisfies the mean ergodic theorem, that is, if for any $x\in\mc C_E$ and $T\in DS$ there exists $\wh x\in\mc C_E$ such that  $\|A_n(T)(x)-\wh x\|_{\mc C_E} \to 0$ as $n \to \ii$,

\begin{pro}\label{p41}
$\mc C^1 \notin (MET)$.
\end{pro}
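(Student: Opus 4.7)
The plan is to mimic the classical obstruction to the mean ergodic theorem on $l^1$, namely the shift operator, and transplant it into $\mc C^1$ by means of the diagonal embedding apparatus developed in Section 3. The guiding observation is that on $l^1$ the averages of the right shift applied to the unit vector $e_1$ have constant $\|\cdot\|_1$-norm equal to $1$ while tending to zero coordinate-wise, so they cannot converge in $l^1$; this phenomenon should transfer verbatim through the conditional expectation and the isometry $\Phi$.

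Concretely, I would fix a countable orthonormal set $\{\varphi_n\}_{n=1}^\ii$ in $\mc H$ (which exists since $\dim\mc H=\ii$), denote by $p_n$ the one-dimensional projection on $\mathbb C\cdot\varphi_n$, and form the commutative von Neumann subalgebra $\mc N=\{x_\xi:\xi\in l^\ii\}$ of $\mc B(\mc H)$. The trace restricted to $\mc N$ is semifinite, so Theorem \ref{t35} supplies the conditional expectation $U:\mc B(\mc H)\to\mc N$, $U\in DS_+$, while Proposition \ref{p33} and Proposition \ref{p34} (applied with $E=l^1$) show that $\Phi$ is simultaneously a surjective isometry $(\mc N,\|\cdot\|_\ii)\to(l^\ii,\|\cdot\|_\ii)$ and $(\mc N\cap\mc C^1,\|\cdot\|_1)\to(l^1,\|\cdot\|_1)$. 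Taking $S:l^\ii\to l^\ii$ to be the right shift $S(\xi_1,\xi_2,\dots)=(0,\xi_1,\xi_2,\dots)$, which is clearly a Dunford-Schwartz operator, I would define
\[
T=\Phi^{-1}S\Phi U:\,\mc B(\mc H)\to\mc N\su\mc B(\mc H)
\]
and verify $T\in DS$ by composing the $\|\cdot\|_\ii$- and $\|\cdot\|_1$-contractivity of each factor on the appropriate subspaces.

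For the counterexample, I would test $T$ on $x=p_1\in\mc C^1$. Since $p_1\in\mc N$, Theorem \ref{t35}(ii) gives $U(p_1)=p_1$ and $\Phi(p_1)$ is the first standard unit vector; iterating, $T^k(p_1)=\Phi^{-1}S^k\Phi(p_1)=p_{k+1}$, so
\[
A_n(T)(p_1)=\frac1{n+1}\sum_{k=1}^{n+1}p_k.
\]
A direct computation yields $\|A_n(T)(p_1)\|_1=1$ for every $n$, whereas $\|A_n(T)(p_1)\|_\ii=1/(n+1)\to 0$. If a $\mc C^1$-limit $\wh x$ existed, then, because $\|\cdot\|_\ii\leq\|\cdot\|_1$ on $\mc C^1$, the averages would converge to $\wh x$ uniformly as well, forcing $\wh x=0$; this contradicts $\|A_n(T)(p_1)\|_1=1$, so $\mc C^1\notin(MET)$.

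The main thing to be careful about is the verification that $T$ belongs to $DS$, since the composition runs through both $\mc B(\mc H)$ and $l^\ii$; this is routine once the double isometric character of $\Phi$ (in $\|\cdot\|_\ii$ and in $\|\cdot\|_1$) and the $DS$-properties of $U$ and $S$ are in hand, and is the only computation I would spell out in full. Everything else is direct bookkeeping against the classical $l^1$-shift picture.
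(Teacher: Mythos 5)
Your proposal is correct and is essentially the paper's own argument: the same operator $T=\Phi^{-1}S\Phi U$ built from the right shift $S$, the conditional expectation $U$, and the isometry $\Phi$, applied to the rank-one projection $p_1=\Phi^{-1}(\{1,0,0,\dots\})$, with your contradiction (constant $\|\cdot\|_1$-norm versus uniform convergence to $0$) a harmless repackaging of the paper's non-Cauchy estimate for $\{A_n(S)(\xi)\}$ in $l^1$. One small point to tighten: if the countable orthonormal set does not span $\mc H$ (in particular when $\mc H$ is non-separable), your $\mc N$ has unit $p=\sup_{n}p_n\neq\mathbf 1$, so Theorem \ref{t35} does not literally apply; do as the paper does and construct $U$ and $T$ inside $\mc B(\mc H_0)$ with $\mc H_0=p(\mc H)$, then extend to $\mc B(\mc H)$ via $y\mapsto T_0(pyp)$.
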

\begin{proof} Let $S: l^\ii \to l^\ii$ be the positive  Dunford-Schwartz operator defined by
\[
S(\{\xi_n\}_{n = 1}^\ii) = \{0,\xi_1,\xi_2, \dots \}, \ \ \{\xi_n\}_{n = 1}^\ii\in l^\ii.
\]
If $\xi=\{1,0,0,\dots \} \in l^1$, then
\[
\begin{split}
&\ \ \ \ \|A_{2n-1}(S)(\xi) - A_{n-1}(S)(\xi) \|_1\\
&=\left\|\frac1{2n}\{\underbrace{1,1,\dots,1}_{2n},0,0,\dots\}-\frac1{n}\{\underbrace{1,1,\dots,1}_n,0,0,\dots\}
\right\|_1=1.
\end{split}
\]
Consequently, the sequence $\{A_n(S)(\xi)\}$ does not converge in the norm $\|\cdot\|_1$.

Let  $\{\varphi_j\}_{j \in J}$ be an orthonormal basis  in the Hilbert space $\mc H$, and let $\{\varphi_{j_n}\}_{n=1}^\ii$ be a countable subset of $ \{\varphi_j\}_{j \in J}$.  Let $p_n$ be the one-dimensional projection on the subspace $\mathbb C \cdot \varphi_{j_n}\su\mc H$, and let $p=\sup\limits_{n \in\mathbb N}p_n$. It is clear that $\mc H_0=p(\mc H)$ is a separable infinite-dimensional Hilbert subspace in $\mc H$ and $\mc K(\mc H_0) = p\mc K(\mc H)p$. Let
\[
\mc N(\mc H_0)=\left\{x_\xi = (wo)-\sum\limits_{n=1}^\ii \xi_n p_n\in\mc B(\mc H_0):\ \xi=\{ \xi_n \}_{n = 1}^\ii\in l^\ii\right\}
\]
be the smallest commutative von Neumann subalgebra in $\mc B(\mc H_0)$ containing the projections $p_n$, $n\in\mathbb N$, and let $\Phi(x_\xi)=\{\xi_n\}_{n=1}^\ii$ be the positive linear surjective isometry from $(\mc N(\mc H_0),\|\cdot\|_\ii)$ onto $( l_\ii,\|\cdot\|_\ii)$ given in Proposition \ref{p33}. Finally, let $U:\mc B(\mc H_0)\to\mc N(\mc H_0)$ be the conditional expectation given in Theorem \ref{t35}.

It is clear that
$$
T=\Phi^{-1} S\Phi U:\mc B(\mc H_0)\to\mc N(\mc H_0)\su\mc{B}(\mc H_0)
$$
is a positive Dunford-Schwartz operator. If  $\xi=\{1,0,0,\dots \} \in l^1$ and $x_\xi=\Phi^{-1}(\xi)$, then  $x_\xi \in \mc{N}(\mc H_0) \cap \mc C^1$ (see Proposition \ref{p34}), and  $U(x_\xi)=x_\xi$ (see Theorem \ref{t35} (ii)). Consequently,
$$
T(x_\xi)=\Phi^{-1} S\Phi U(x_\xi)=\Phi^{-1}S\Phi(x_\xi).
$$
Now, repeating the proof of Theorem \ref{t36}, we conclude that the averages $\{A_n(T)(x_\xi)\}$ do not converge in the norm $\|\cdot\|_1$, that is, $\mc C^1 \notin (MET)$.
\end{proof}

Here is another sufficient condition for $\mc C_E \notin (MET)$:
\begin{pro}\label{p42}
If $(E,\|\cdot\|_E)\su c_0$ is non-separable fully symmetric sequence space, then $\mc C_E \notin (MET)$.
\end{pro}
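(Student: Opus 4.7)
My plan is to mirror Proposition \ref{p41}: first produce a Dunford-Schwartz operator $S$ on $l^\ii$ together with a point $\eta \in E$ for which $\{A_n(S)(\eta)\}$ fails to converge in $(E,\|\cdot\|_E)$, and then transport this counterexample into $\mc C_E$ using the isometry $\Phi$ of Propositions \ref{p33}--\ref{p34}, the conditional expectation $U$ of Theorem \ref{t35}, and the composition $T_0 = \Phi^{-1} S \Phi U$.

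To extract the bad point from non-separability, I would use the characterization: a fully symmetric sequence space $E \su c_0$ is separable iff its norm is order continuous iff $\|\eta \chi_{\{N+1,\ldots\}}\|_E \to 0$ as $N \to \ii$ for every $\eta = \eta^* \in E_+$. (The nontrivial direction follows by splitting $\zeta_n^* \leq \zeta_n^* \chi_{\{1,\ldots,N\}} + \zeta_1^* \chi_{\{N+1,\ldots\}}$ for $\zeta_n \downarrow 0$ and sending $n \to \ii$ then $N \to \ii$.) Since the tail is monotone in $N$, non-separability supplies $\eta = \eta^* \in E_+$ and $\ep > 0$ with $\|\eta \chi_{\{N+1,\ldots\}}\|_E \geq \ep$ for every $N$.

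For $S$ I would take the left shift on $l^\ii$, $S(\xi_1,\xi_2,\ldots) = (\xi_2,\xi_3,\ldots)$, which is manifestly in $DS$. Because $\eta$ is non-increasing, $A_n(S)(\eta)_m = (n+1)^{-1}\sum_{k=0}^n \eta_{m+k} \geq \eta_{m+n} = (S^n\eta)_m$, giving the pointwise minorant $A_n(S)(\eta) \geq S^n\eta \geq 0$; the symmetric property of $E$, combined with the fact that $(\eta_{n+1},\eta_{n+2},\ldots)$ and $\eta\chi_{\{n+1,\ldots\}}$ share the same non-increasing rearrangement, then yields $\|A_n(S)(\eta)\|_E \geq \|\eta\chi_{\{n+1,\ldots\}}\|_E \geq \ep$. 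Suppose the averages converge in $E$ to some $\wh\eta$. Then $\|\wh\eta\|_E \geq \ep$, and the identity $A_n(S)(S\eta) - A_n(S)(\eta) = (n+1)^{-1}(S^{n+1}\eta - \eta)$ (of $E$-norm $O(1/n)$), combined with continuity of $S$ on $E$ by interpolation, forces $S\wh\eta = \wh\eta$; so $\wh\eta$ is a constant sequence in $c_0$, hence $\wh\eta = 0$, a contradiction.

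The lifting step is a verbatim repetition of Proposition \ref{p41}: fix a countable orthonormal subset of a basis of $\mc H$, introduce the one-dimensional projections $p_n$, the projection $p = \sup_n p_n$, the separable subspace $\mc H_0 = p(\mc H)$, and the commutative subalgebra $\mc N(\mc H_0)$; set $T_0 = \Phi^{-1} S \Phi U \in DS$. For $x_\eta = \Phi^{-1}(\eta) \in \mc N(\mc H_0) \cap \mc C_E$ one has $U(x_\eta) = x_\eta$, hence $A_n(T_0)(x_\eta) = \Phi^{-1}(A_n(S)(\eta))$ and $\|A_n(T_0)(x_\eta)\|_{\mc C_E} = \|A_n(S)(\eta)\|_E$; since $\mc N(\mc H_0) \cap \mc C_E$ is closed in $\mc C_E$ (isometric to $E$), the commutative divergence transfers. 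If $\mc H$ is non-separable, the extension $T(y) = T_0(pyp) \in DS$ on $\mc B(\mc H)$ preserves the iterates on $x_\eta$ and finishes the argument. I expect the substantive step to be the commutative one: the subtle point is that $l^\ii$-shift-invariance forces any candidate limit to be constant in $c_0$, hence zero, so the uniform tail bound $\ep$ cannot be absorbed; the lifting itself is mechanical.
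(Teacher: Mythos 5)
Your proposal is correct and follows essentially the same route as the paper: extract from non-separability a non-increasing $\eta\in E$ whose tails have $E$-norm bounded below by some $\ep>0$, show that the Ces\`aro averages of a shift operator dominate those tails while the only possible $E$-limit is $0$, and then transport the counterexample into $\mc C_E$ via $T=\Phi^{-1}S\Phi U$ exactly as in Proposition \ref{p41}. The only (harmless) deviations are that you use the left shift and identify the candidate limit through shift-invariance, whereas the paper uses the right shift $S(\xi)=\{0,\xi_1,\xi_2,\dots\}$ and coordinate-wise convergence to $0$.
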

\begin{proof}
If $(E,\|\cdot\|_E)\su c_0$ is a non-separable fully symmetric sequence space, then there exists $\xi=\{\xi_n\}_{n = 1}^\ii=\{ \xi^*_n \}_{n = 1}^\ii\in E$, hence $\xi_n\downarrow 0$, such that
\begin{equation}\label{e2}
\|\{\underbrace{0,0,\dots,0}_{n+1},\xi_{n+2},\dots\}\|_E\downarrow\alpha >0.
\end{equation}
Let the operator $S \in DS$ be defined as in the proof of Proposition \ref{p41}. Then $S^k(\xi)=\{\underbrace{0,0,\dots,0}_k,\xi_1,\xi_2,\dots\}$ and
$$
\sum_{k=0}^nS^k(\xi)=\{ \eta_m^{(n)}\}_{m=1}^\ii,
$$
where
$$
\eta_m^{(n)}=\xi_1+\xi_2+\ldots+\xi_m\text{ \ \ for \ \ } 1\leq m\leq n+1
$$
and
$$
\eta_m^{(n)}=\xi_{m-n}+\xi_{m-n+1}+\ldots+ \xi_{m}\text{ \ \ for \ \ }m>n+1.
$$
Since $\xi_n\downarrow  0$, given $1\leq m\leq n+1$, we have
$$
0\leq\frac1{n+1}\,\eta_m^{(n)}\leq\frac1{n+1}\sum_{k=1}^{n+1}\xi_k \to 0\text{ \ \ as\ \ }n\to\ii,
$$
implying that $A_n(S)(\xi)\to 0$ coordinate-wise.

Assume that there exists $\widehat\xi\in E$ such that $\|A_n(S)(\xi)- \widehat{\xi}\|_E\rightarrow 0$. Then we have
$\|A_n(S)(\xi)-\wh\xi\|_\ii\rightarrow 0$; in particular, $A_n(S)(\xi)\to 0$ coordinate-wise, hence $\widehat\xi=0$.

On the other hand, as $\xi_n\downarrow 0$, we obtain
\begin{equation*}
\begin{split}
A_n(S)(\xi)=\bigg\{&\frac{\xi_1}{n+1},\,\frac{\xi_1+\xi_2}{n+1},\dots,\,\frac{\xi_1+\xi_2+\ldots+\xi_{n+1}}{n+1},\,\frac{\xi_2+\xi_3+\ldots+\xi_{n+2}}{n+1},\\
&\frac{\xi_3+\xi_4+\ldots+\xi_{n+3}}{n+1},\dots,\,\frac{\xi_{m-n}+\xi_{m-n+1}+\ldots+ \xi_m}{n+1},\dots\bigg\}\\
&\ge\{\underbrace{0,0,\dots,0}_{n+1},\xi_{n+2},\dots\}.
\end{split}
\end{equation*}
Therefore, in view of (\ref{e2}),
$$
\|A_n(S)(\xi)\|_E\ge\al,
$$
implying that the sequence $\{A_n(S)(\xi)\}$ does not converge in the norm $\|\cdot\|_E$.

Now, if we define the Dunford-Schwartz operator $T \in DS$ as in the proof of Proposition \ref{p41}, then repeating its proof for $x=\Phi^{-1}(\xi)$, we conclude that the sequence $\{A_n(T)(x)\}$ does not converge in $(\mc C_E,\|\cdot\|_{\mc C_E})$. This means that $\mc C_E \notin (MET)$.
\end{proof}

Fix $T \in DS$. By Theorem \ref{t32}, for every $x\in\mc K(\mc H)$ there exists $\wh x\in\mc K(\mc H)$ such that $\|A_n(T)(x)-\wh x\|_\ii \to 0$ as $n \to \ii$. Therefore, one can define a  linear operator $P_T:\mc K(\mc H) \to \mc K(\mc H)$  by setting $P_T(x)=\wh x$. Then we have
$$
\|P_T(x)\|_\ii=\lim\limits_{n\to\ii}\|A_n(T)(x)\|_\ii\leq \|x\|_\ii,
$$
Besides, since the unit ball in $(\mc C^1,\|\cdot\|_1)$ is closed in measure topology \cite[Proposition 3.3]{ddp} and $\|A_n(T)(x)\|_1 \leq \|x\|_1$  for all $x \in \mc C^1$, it follows that $\| P_T(x)\|_1 \leq \|x\|_1$, $x\in\mc C^1$.  Consequently, $\| P_T\|_{\mc C^1\rightarrow \mc C^1}\leq 1$, and, according to \cite[Proposition 1.1]{cl1}, there exists a unique operator $\wh P\in DS$ such that $\wh P(x)=P_T(x)$ whenever $x\in\mc K(\mc H)$. In what follows, we denote $\wh P$ by $P_T$.

\begin{lm}\label{l41}
If $T \in DS$ and $x\in\mc K(\mc H)$, then
\[
P_TT(x)=P_T(x)=TP_T(x).
\]
\end{lm}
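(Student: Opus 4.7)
The plan is to verify both equalities by exploiting the standard ``telescoping'' identity that appears in every proof of the mean ergodic theorem, together with the fact that $T$ restricted to $\mc K(\mc H)$ is a contraction in $\|\cdot\|_\ii$ (noted at the end of Section 2), so that in particular $T$ is continuous in the uniform topology.

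For the first equality $P_TT(x)=P_T(x)$, I would write, for any $x\in\mc K(\mc H)$,
\[
A_n(T)(T(x))-A_n(T)(x)=\frac{1}{n+1}\sum_{k=0}^n\big(T^{k+1}(x)-T^k(x)\big)=\frac{1}{n+1}\big(T^{n+1}(x)-x\big).
\]
Since $\|T^{n+1}(x)\|_\ii\le\|x\|_\ii$, the right-hand side satisfies
\[
\left\|\frac{1}{n+1}(T^{n+1}(x)-x)\right\|_\ii\le\frac{2\|x\|_\ii}{n+1}\to 0.
\]
But $A_n(T)(T(x))\to P_T(T(x))=P_TT(x)$ and $A_n(T)(x)\to P_T(x)$ in $\|\cdot\|_\ii$ by definition of $P_T$, so passing to the limit yields $P_TT(x)=P_T(x)$.

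For the second equality $TP_T(x)=P_T(x)$, I would use continuity of $T$ in the uniform norm: since $A_n(T)(x)\to P_T(x)$ uniformly and $T$ is a $\|\cdot\|_\ii$-contraction on $\mc K(\mc H)$,
\[
T(A_n(T)(x))\to TP_T(x)\text{ \ in\ }\|\cdot\|_\ii.
\]
On the other hand, $T(A_n(T)(x))=\frac{1}{n+1}\sum_{k=0}^n T^{k+1}(x)=A_n(T)(T(x))$, which by the first part converges to $P_T(x)$. Uniqueness of uniform limits then gives $TP_T(x)=P_T(x)$.

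There is no real obstacle; the only mild subtlety is making sure one stays in the uniform norm throughout, which is legitimate because Theorem~\ref{t32} provides convergence of $A_n(T)$ in $\|\cdot\|_\ii$ for every $x\in\mc K(\mc H)$, and the restriction of $T$ to $\mc K(\mc H)$ is $\|\cdot\|_\ii$-contractive, so both $T$ and the averaging are continuous maps on $(\mc K(\mc H),\|\cdot\|_\ii)$.
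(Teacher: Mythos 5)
Your proof is correct and follows essentially the same route as the paper: the telescoping identity $A_n(T)(T(x))-A_n(T)(x)=\frac{1}{n+1}\bigl(T^{n+1}(x)-x\bigr)$ together with the $\|\cdot\|_\ii$-contractivity of $T$ on $\mc K(\mc H)$ gives $P_TT(x)=P_T(x)$, and uniform continuity of $T$ applied to $A_n(T)(x)\to P_T(x)$ gives $TP_T(x)=P_T(x)$, exactly as in the paper's argument.
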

\begin{proof}
We have
\[
\|(I-T)A_n(T)(x)\|_\ii=\left\|\frac{(I-T^{n+1})(x)}{n+1}\right\|_\ii\longrightarrow 0 \text{ \ \ as\ \ } n\to\ii.
\]
On the other hand,
\[
TA_n(T)(x) =  \frac1{n+1}\sum_{k=0}^{n} T^k(Tx) \stackrel{\|\cdot\|_\ii}{\longrightarrow} P_T(T(x)),
\]
implying that
\[
(I-T)A_n(T)(x) = A_n(T)(x)-TA_n(T)(x) \stackrel{\|\cdot\|_\ii}{\longrightarrow} P_T(x)-P_TT(x),
\]
hence $P_TT(x)=P_T(x)$.

Now, as $ \|A_n(T)(x)-P_T(x)\|_\ii \to 0$, we have $\|T(A_n(T)(x)) -T(P_T(x))\|_\ii\to 0$ as $n\to\ii$, and the result follows.
\end{proof}

\begin{cor}\label{c41}
If \ $T \in DS$ and $x\in\mc K(\mc H)$, then
\[
T^k(P_T(x))=P_T(x)\text{ \ \ and \ \ } P_T^2(x)=P_T(x).
\]
\end{cor}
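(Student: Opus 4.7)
The plan is to deduce both identities directly from Lemma \ref{l41}, which already gives the key building block $T(P_T(x)) = P_T(x)$ for $x \in \mc K(\mc H)$.

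First I would establish $T^k(P_T(x)) = P_T(x)$ by induction on $k$. The base case $k=0$ is trivial and $k=1$ is the second equality in Lemma \ref{l41}. For the inductive step, assuming $T^{k-1}(P_T(x)) = P_T(x)$, I apply $T$ to both sides and use the induction hypothesis together with Lemma \ref{l41} to obtain
\[
T^k(P_T(x)) = T(T^{k-1}(P_T(x))) = T(P_T(x)) = P_T(x).
\]

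For the idempotence $P_T^2(x) = P_T(x)$, the plan is to compute $A_n(T)(P_T(x))$ explicitly. Since $T^k(P_T(x)) = P_T(x)$ for every $k \ge 0$ by the first part, we have
\[
A_n(T)(P_T(x)) = \frac{1}{n+1}\sum_{k=0}^n T^k(P_T(x)) = \frac{1}{n+1}\sum_{k=0}^n P_T(x) = P_T(x)
\]
for every $n$. By definition of $P_T$ on $\mc K(\mc H)$ (noting that $P_T(x) \in \mc K(\mc H)$, as $P_T$ maps $\mc K(\mc H)$ to $\mc K(\mc H)$), the left-hand side converges uniformly to $P_T(P_T(x)) = P_T^2(x)$, while the right-hand side is the constant $P_T(x)$, so $P_T^2(x) = P_T(x)$.

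There is no real obstacle here; the only point requiring attention is to invoke the correct interpretation of $P_T$. Since the statement restricts $x$ to $\mc K(\mc H)$, we use the original defining property $\|A_n(T)(y) - P_T(y)\|_\ii \to 0$ for $y \in \mc K(\mc H)$, applied to $y = P_T(x)$, which lies in $\mc K(\mc H)$ by construction of $P_T$. No appeal to the extension $\widehat P$ on $\mc C^1$ is needed for this corollary.
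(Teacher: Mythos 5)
Your proof is correct and follows exactly the route the paper intends: the corollary is stated without proof as an immediate consequence of Lemma \ref{l41}, namely iterating $T(P_T(x))=P_T(x)$ to get $T^k(P_T(x))=P_T(x)$, and then observing that $A_n(T)(P_T(x))$ is constantly $P_T(x)$, so its uniform limit $P_T^2(x)$ equals $P_T(x)$ (using that $P_T(x)\in\mc K(\mc H)$, so the defining property of $P_T$ applies). Nothing further is needed.
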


We need the following property of separable symmetric sequence spaces \cite[Proposition 2.2]{dds}.
\begin{pro}\label{p43}
Let $(E,\|\cdot\|_E)$ be a separable symmetric sequence space. If $\mc C_E\ni y_n\prec\prec x\in\mc C_E$ for every
$n\in\mathbb N$ and $\|y_n\|_\ii \to 0$ as $n\to\ii$, then $\|y_n\|_{\mc C_E} \to 0$ as $n\to \ii$.
\end{pro}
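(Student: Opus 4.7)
The plan is to pass to the singular-value sequences and reduce everything to a statement purely in $E$. Write $\xi_n=\{s_k(y_n)\}_{k=1}^\ii$ and $\eta=\{s_k(x)\}_{k=1}^\ii$; both are non-negative and non-increasing, $\|y_n\|_{\mc C_E}=\|\xi_n\|_E$ and $\|y_n\|_\ii=\xi_n(1)=:c_n$, and the assumption $y_n\prec\prec x$ becomes $\xi_n\prec\prec\eta\in E$. The goal is then: $c_n\to 0$ together with $\xi_n\prec\prec\eta$ imply $\|\xi_n\|_E\to 0$.

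The essential preliminary is that separability of the symmetric sequence space $E$ is equivalent to order continuity of the norm; this gives $\|\eta\chi_{(m,\ii)}\|_E\to 0$ as $m\to\ii$. Given $\ep>0$, I would first fix $m=m(\ep)$ with $\|\eta\chi_{(m,\ii)}\|_E<\ep$ and split $\xi_n=\xi_n\chi_{[1,m]}+\xi_n\chi_{(m,\ii)}$. The head part is immediate: $\xi_n\chi_{[1,m]}\leq c_n\chi_{[1,m]}$ gives $\|\xi_n\chi_{[1,m]}\|_E\leq c_n\|\chi_{[1,m]}\|_E\to 0$ as $n\to\ii$, since $m$ is fixed.

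The main obstacle is the tail $\xi_n\chi_{(m,\ii)}$, whose non-increasing rearrangement is the shifted sequence $\{\xi_n(m+k)\}_{k=1}^\ii$. Direct manipulation of $\xi_n\prec\prec\eta$ yields only
\[
\sum_{k=1}^j\xi_n(m+k)\leq\sum_{k=1}^j\eta(m+k)+\Bigl[\sum_{k=1}^m\eta(k)-\sum_{k=1}^m\xi_n(k)\Bigr],
\]
in which the bracketed correction is bounded but does not vanish, so the HLP majorization does not pass cleanly to tails. Combining this estimate with the pointwise bound $\xi_n(m+k)\leq c_n$, I would construct an explicit non-increasing majorant of the form $\gm_n=\eta\chi_{(m,\ii)}+c_n\chi_{[1,K_n]}$ (choosing $K_n$ so that $c_nK_n$ absorbs the correction) satisfying $\xi_n\chi_{(m,\ii)}\prec\prec\gm_n$, and then invoke full symmetry of $\mc C_E$ to obtain $\|\xi_n\chi_{(m,\ii)}\|_E\leq\|\eta\chi_{(m,\ii)}\|_E+c_n\|\chi_{[1,K_n]}\|_E$.

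The delicate final point, and the place where the structure of a separable $E$ is really used, is to show $c_n\|\chi_{[1,K_n]}\|_E\to 0$: although $c_n\to 0$, $K_n$ grows like $c_n^{-1}$, so the product depends on the fundamental function $\phi_E(N)=\|\chi_{[1,N]}\|_E$ and requires a careful interplay between the vanishing of $c_n$ and the growth of $\phi_E$ provided by order continuity of the norm. Assembling the head bound, the majorant bound, and this tail estimate yields $\|\xi_n\|_E<O(\ep)$ for all sufficiently large $n$, completing the proof.
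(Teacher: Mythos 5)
The paper does not actually prove this proposition; it cites \cite[Proposition 2.2]{dds}, so your argument has to stand on its own. Up to and including the majorant construction it does: writing $C_m=\sum_{k=1}^m\eta(k)$ and choosing $K_n$ with $c_nK_n\ge C_m$, the sequence $\rho_n(k)=\eta(m+k)+c_n\chi_{\{k\le K_n\}}$ is non-increasing, and your two bounds $\sum_{k=1}^j\xi_n(m+k)\le jc_n$ (used for $j\le K_n$) and $\sum_{k=1}^j\xi_n(m+k)\le\sum_{k=1}^j\eta(m+k)+C_m$ (used for $j>K_n$) give $\{\xi_n(m+k)\}_{k\ge1}\prec\prec\rho_n$; since a separable symmetric sequence space is fully symmetric, this yields $\|\xi_n\chi_{(m,\ii)}\|_E\le\|\eta\chi_{(m,\ii)}\|_E+c_n\|\chi_{[1,K_n]}\|_E$, as you claim. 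The head estimate and the use of order continuity for $\|\eta\chi_{(m,\ii)}\|_E\to0$ are also fine.

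The genuine gap is your final step, and it cannot be repaired under the stated hypotheses. Since $c_nK_n\approx C_m$ stays bounded away from zero, $c_n\|\chi_{[1,K_n]}\|_E\to0$ amounts to $\phi_E(N)/N\to0$ as $N\to\ii$, where $\phi_E(N)=\|\chi_{[1,N]}\|_E$; this is \emph{not} provided by order continuity: $l^1$ is separable, yet $\phi_{l^1}(N)=N$. In fact, for a fully symmetric sequence space, $\lim_N\phi_E(N)/N>0$ holds if and only if $E=l^1$ as sets (if the limit is $\gm>0$, apply full symmetry to $\bigl(\frac1N\sum_{k\le N}\xi^*_k\bigr)\chi_{[1,N]}\prec\prec\xi$ to get $\sum_{k\le N}\xi^*_k\le\|\xi\|_E/\gm$ for all $N$). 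Moreover, the proposition as stated is actually false for $E=l^1$: take diagonal $x,y_n\in\mc C^1$ with $s_k(x)=\frac1{k(k+1)}$, and $s_k(y_n)=\frac1{n+1}$ for $k\le n$, $s_k(y_n)=0$ for $k>n$; then $y_n\prec\prec x$, $\|y_n\|_\ii\to0$, but $\|y_n\|_{\mc C^1}=\frac{n}{n+1}\to1$. (Under pointwise domination $s_k(y_n)\le s_k(x)$ the conclusion would follow from order continuity alone, via $\|y_n\|_{\mc C_E}\le\|\{\min(c_n,\eta(k))\}_k\|_E\to0$.) So what your method proves is the proposition with the additional hypothesis $E\ne l^1$ as sets, which happens to be exactly the setting in which the paper applies it (Theorem \ref{t14}, (ii)$\Rightarrow$(i)); once you add the fundamental-function fact above, your proof is complete in that case, but the ``interplay provided by order continuity'' you invoke at the end does not exist in general.
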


Now we can finalize the proof of Theorem \ref{t14}:
\begin{proof}
(i) $\Rightarrow$ (ii): Proposition \ref{p42} implies that $E$ is separable. If $E = l^1$ as sets, then the norms
$\|\cdot\|_E$ and $\|\cdot\|_1$ are equivalent  \cite[Part II, Ch.\,6, \S\,6.1]{rgmp}. Therefore, in view of Proposition \ref{p41}, we would have $(\mc C_E,\|\cdot\|_{\mc C_E}) \notin (MET)$, a contradiction.

(ii) $\Rightarrow$ (i): Let $(E,\|\cdot\|_E)$ be separable, $E \neq l^1$, and let $T \in DS$.
If $x\in\mc C_E $ and $y= x-P_T(x)$, then $P_T(y) = 0$, which, by Theorem \ref{t33}, implies $\|A_n(T)(y)\|_\ii\to 0$. Since $E$ is a separable symmetric sequence space, it follows from Proposition \ref{p43} that  \begin{equation}\label{e3}
\|A_n(T)(y)\|_{\mc C_E}\to 0.
\end{equation}
Since $P_T(z)\prec\prec z$ for all $z\in\mc K(\mc H)$ (see Section 2) and $T\in DS$, it follows that $A_n(T)(P_T(x))\prec \prec P_T(x) \prec \prec x$, hence $A_n(T)(P_T(x))-P_T(x)\prec\prec 2x$. Next, as $A_n(T)(P_T(x))\stackrel{\|\cdot\|_\ii}{\longrightarrow}P_T(x)$, Proposition \ref{p43} entails
\begin{equation}\label{e3a}
\|A_n(T)(P_T(x))-P_T(x)\|_{\mc C_E}\to 0.
\end{equation}
Now, utilizing (\ref{e3}) and (\ref{e3a}), we obtain
\begin{equation*}
\begin{split}
\|A_n(T)(x) - P_T(x)\|_{\mc C_E}&=\|A_n(T)(x) - A_n(T)(P_T(x))+A_n(T)(P_T(x))-P_T(x)\|_{\mc C_E}\\
&\leq \|A_n(T)(y)\|_{\mc C_E}+\|A_n(T)(P_T(x))-P_T(x)\|_{\mc C_E}\to 0
\end{split}
\end{equation*}
as $n \to \ii$. Therefore $\mc C_E \in (MET)$.
\end{proof}

\section{Ergodic theorems in Orlicz, Lorentz, and Marcinkiewicz ideals of compact operstors}

In this section we present applications of Theorems \ref{t12} and \ref{t14} to Orlicz, Lorentz and Marcinkiewicz ideals of compact operators.

1. Let $\Phi$ be an {\it Orlicz function}, that is, $\Phi:[0,\ii)\to [0,\ii)$ is left-continuous, convex, increasing and such that $\Phi(0)=0$ and $\Phi(u)>0$ for some $u\ne 0$ (see, for example, \cite[Ch.\,2, \S\,2.1]{es}, \cite[Ch.\,4]{lt}).  Let
\[
 l^\Phi(\mathbb N)=\left \{\xi= \{ \xi_n \}_{n = 1}^\ii\in  l^\ii: \  \sum\limits_{n=1}^\ii \Phi\left (\frac {|\xi_n |}a \right )
<\ii \text { \ for some \ } a>0 \right \}
\]
be the corresponding {\it Orlicz sequence space},  and let
$$\| \xi\|_\Phi=\inf \left \{ a>0:\ \sum\limits_{n=1}^\ii \Phi\left (\frac {|\xi_n |}a \right ) \leq 1\right\}
$$
be the {\it Luxemburg norm} in $ l^\Phi(\mathbb N)$. It is well-known that  $( l^\Phi(\mathbb N), \| \cdot\|_\Phi)$ is a fully symmetric sequence space.

If $\Phi(u)>0$ for all $u\ne 0$, then $\sum\limits_{n=1}^\ii\Phi(a^{-1})=\ii$ for each $a>0$, hence $\mathbf 1=\{1,1,...\}\notin l^\Phi(\mathbb N)$ and $l^\Phi(\mathbb N)\su c_0$. If $\Phi(u)=0$ for all $0\leq u<u_0$, then $\mathbf 1 \in l^\Phi$ and $l^\Phi(\mathbb N)= l^\ii$.

It is said that an Orlicz function $\Phi$ satisfies {\it $(\Delta_2)$-condition at $0$}  if there exist $u_0\in (0,\ii)$ and $k >0 $  such that $\Phi(2u)<k\, \Phi(u)$ for all $0<u< u_0$. It is well known that an Orlicz function $\Phi$ satisfies $(\Delta_2)$-condition at $0$  if and only if $(l^\Phi(\mathbb N), \| \cdot\|_\Phi)$ is separable; see \cite[Ch.\,2, \S\,2.1, Theorem 2.1.17]{es}, \cite[Ch.\,4, Proposition 4.a.4]{lt}.

We also note that  $l^\Phi(\mathbb N)=l^1$, as sets, if and only if $\limsup\limits_{u\to 0}\frac{\Phi(u)}u>0$; see \cite[Ch.\,4, Proposition 4.a.5]{lt}, \cite[Ch.\,16, \S\,16.2]{rgmp}.

If  $\Phi(u)>0$ for all $u\ne 0$, then $l^\Phi(\mathbb N)\su c_0$, and we can define
\[
\mc C^\Phi=\mc C_{l^\Phi(\mathbb N)},\ \ \| x\|_\Phi=\| x\|_{\mc C_{l^\Phi(\mathbb N)}}, \ x \in \mc C^\Phi.
\]
Now, Theorems  \ref{t12} and \ref{t14} yield the following.

\begin{teo}\label{t51} Let $\Phi$ be an Orlicz function such that $\Phi(u)>0$ for all $u\ne 0$. Then
\begin{enumerate}[(i)]
\item
$\mc C^\Phi\in (IET)$;
\item
$(\mc C^\Phi,\| \cdot\|_\Phi)\in (MET)$ if and only if
$\Phi$ satisfies $(\Delta_2)$-condition at $0$  and $\lim\limits_{u\to 0}\frac{\Phi(u)}u=0$.
\end{enumerate}
\end{teo}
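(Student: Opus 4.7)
The plan is to reduce both parts directly to the general criteria already established in Sections 3 and 4, with only light Orlicz-space bookkeeping.

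For part (i), I would first note that the hypothesis $\Phi(u)>0$ for all $u\ne 0$ guarantees, by the discussion preceding the theorem, that $l^\Phi(\mathbb N)\subset c_0$; it has already been recalled that $(l^\Phi(\mathbb N),\|\cdot\|_\Phi)$ is a fully symmetric sequence space. Consequently $(\mc C^\Phi,\|\cdot\|_\Phi)=(\mc C_{l^\Phi(\mathbb N)},\|\cdot\|_{\mc C_{l^\Phi(\mathbb N)}})$ is a fully symmetric ideal of compact operators in $\mc H$, so Theorem \ref{t33} applies verbatim and yields $\mc C^\Phi\in(IET)$. Equivalently, one may invoke the implication $(ii)\Rightarrow(i)$ in the criterion Theorem \ref{t37}.

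For part (ii), I would specialize Theorem \ref{t14} to $E=l^\Phi(\mathbb N)$: the ideal $\mc C^\Phi$ lies in $(MET)$ if and only if $(l^\Phi(\mathbb N),\|\cdot\|_\Phi)$ is separable and $l^\Phi(\mathbb N)\ne l^1$ as sets. It then only remains to translate each of these two conditions into a property of $\Phi$. The first translation is quoted in the excerpt: separability of $(l^\Phi(\mathbb N),\|\cdot\|_\Phi)$ is equivalent to $\Phi$ satisfying the $(\Delta_2)$-condition at $0$. For the second, the cited fact is that $l^\Phi(\mathbb N)=l^1$ as sets if and only if $\limsup_{u\to 0}\Phi(u)/u>0$; since $\Phi$ is convex with $\Phi(0)=0$, the ratio $\Phi(u)/u$ is nondecreasing in $u>0$, so this $\limsup$ is actually a limit, and its vanishing is exactly the condition $\lim_{u\to 0}\Phi(u)/u=0$ stated in (ii).

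I expect no serious obstacle: both parts are direct consequences of the main theorems of the paper combined with standard Orlicz-sequence-space characterizations that are already quoted in the section. The only minor point worth stating explicitly is the reconciliation of the ``$\limsup=0$'' form from the literature with the ``$\lim=0$'' form in the statement, which is immediate from convexity of $\Phi$.
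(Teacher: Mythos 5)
Your proposal is correct and matches the paper's own (essentially one-line) derivation: the paper also obtains Theorem \ref{t51} directly from Theorems \ref{t12} and \ref{t14} together with the quoted facts that separability of $l^\Phi(\mathbb N)$ is equivalent to the $(\Delta_2)$-condition at $0$ and that $l^\Phi(\mathbb N)=l^1$ as sets iff $\limsup_{u\to 0}\Phi(u)/u>0$. Your explicit remark that convexity of $\Phi$ turns the $\limsup$ into a genuine limit is a small but welcome clarification the paper leaves implicit.
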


2. Let $\psi$ be a concave function on $[0, \ii)$ with $\psi(0)=0$ and $\psi(t)>0$ for all $t>0$, and let
\[
\Lambda_\psi(\mathbb N)=\left \{\xi= \{ \xi_n \}_{n = 1}^{\ii} \in  l^\ii: \  \|\xi \|_{\psi} =
\sum\limits_{n=1}^\ii \xi^*_n(\psi(n)-\psi(n-1))<\ii\right \},
\]
the corresponding {\it Lorentz sequence space}. The pair $(\Lambda_\psi(\mathbb N), \|\cdot\|_\psi)$
is a fully symmetric sequence space; see, for example, \cite[Ch.\,II, \S\,5]{kps}, \cite[Part III, Ch.\,9, \S\,9.1]{rgmp}. Besides, if $\psi(\ii)=\ii$, then $\mathbf 1\notin\Lambda_\psi(\mathbb N)$ and   $\Lambda_\psi(\mathbb N)\su c_0$; if
$\psi(\ii)<\ii$, then $\mathbf 1\in\Lambda_\psi(\mathbb N)$ and $\Lambda_\psi(\mathbb N)= l^\ii$.

It is well known that $(\Lambda_\psi(\mathbb N),\| \cdot \|_{\psi})$ is separable if and only if $\psi(+0) = 0$ and $\psi(\ii) = \ii$; see, for example, \cite[Ch.\,II, \S\,5, Lemma 5.1]{kps}, \cite[Ch.\,9, \S\,9.3, Theorem 9.3.1]{rgmp}. It is clear that
$\lim\limits_{t\to\ii}\frac{\psi(t)}t>0$ if and only if the norms $\|\cdot\|_{\psi}$ and $\|\cdot\|_1$ are equivalent on
$\Lambda_\psi(\mathbb N)$, that is, if $\Lambda_\psi(\mathbb N)=l^1$, as sets.

If $\psi(\ii)=\ii$, then  $\Lambda_\psi(\mathbb N)\su c_0$, and we can define
\[
\mc C_\psi=\mc C_{\Lambda_\psi(\mathbb N)}, \ \ \| x\|_\psi=\| x\|_{\mc{C}_{\Lambda_\psi(\mathbb N)}}, \ x\in\mc C_\psi.
\]
Theorems \ref{t12} and \ref{t14} imply the following.
\begin{teo}\label{t52}
Let $\psi$ be a concave function on $[0, \ii)$ with $\psi(0)=0$, $\psi(t)>0$ for all $t > 0$, and let $\psi(\ii)=\ii$. Then
\begin{enumerate}[(i)]
\item
$C_\psi\in  (IET)$;
\item
$(\mc C_\psi,\| \cdot\|_\psi) \in  (MET)$ if and only if $\psi(+0)=0$  and  $\lim\limits_{t\to\ii}\frac{\psi(t)}t=0$.
\end{enumerate}
\end{teo}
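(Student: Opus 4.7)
The plan is to deduce both parts directly from the general criteria established earlier in the paper, namely Theorem \ref{t37} and Theorem \ref{t14}, combined with the standard facts about Lorentz sequence spaces recalled just before the statement.

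For (i), the hypothesis $\psi(\ii)=\ii$ has already been used to note that $\Lambda_\psi(\mathbb N)\su c_0$, and this in turn gives $\mc C_\psi=\mc C_{\Lambda_\psi(\mathbb N)}\su\mc K(\mc H)$. Since $(\mc C_\psi,\|\cdot\|_\psi)$ is fully symmetric (as $(\Lambda_\psi(\mathbb N),\|\cdot\|_\psi)$ is fully symmetric), Theorem \ref{t37} immediately yields $\mc C_\psi\in (IET)$.

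For (ii), I apply Theorem \ref{t14}: $\mc C_\psi\in (MET)$ if and only if $(\Lambda_\psi(\mathbb N),\|\cdot\|_\psi)$ is separable and $\Lambda_\psi(\mathbb N)\neq l^1$, as sets. By the cited criterion, under the standing assumption $\psi(\ii)=\ii$ separability of $(\Lambda_\psi(\mathbb N),\|\cdot\|_\psi)$ is equivalent to $\psi(+0)=0$. Also, $\Lambda_\psi(\mathbb N)=l^1$ as sets is equivalent to the norms $\|\cdot\|_\psi$ and $\|\cdot\|_1$ being equivalent on $\Lambda_\psi(\mathbb N)$, which, as recalled in the text, is equivalent to $\lim_{t\to\ii}\psi(t)/t>0$; note this limit exists in $[0,\ii)$ because concavity of $\psi$ with $\psi(0)=0$ makes $\psi(t)/t$ non-increasing. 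Therefore $\Lambda_\psi(\mathbb N)\neq l^1$ as sets is equivalent to $\lim_{t\to\ii}\psi(t)/t=0$.

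Combining these two equivalences, condition (ii) of Theorem \ref{t14} translates precisely to the conjunction $\psi(+0)=0$ and $\lim_{t\to\ii}\psi(t)/t=0$, giving the claimed criterion. There is no genuine obstacle here; the proof is essentially a repackaging of Theorems \ref{t37} and \ref{t14} using the Lorentz-specific characterizations of separability and of norm equivalence to $\|\cdot\|_1$ that are standard and already quoted from the references.
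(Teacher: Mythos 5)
Your proposal is correct and follows exactly the route the paper intends: the paper gives no separate argument for this theorem, simply deducing (i) from the individual ergodic theorem (Theorem \ref{t12}/\ref{t37}, using $\Lambda_\psi(\mathbb N)\su c_0$ when $\psi(\ii)=\ii$) and (ii) from Theorem \ref{t14} together with the quoted characterizations of separability of $\Lambda_\psi(\mathbb N)$ and of $\Lambda_\psi(\mathbb N)=l^1$ as sets. Your added remark that concavity with $\psi(0)=0$ makes $\psi(t)/t$ non-increasing, so the limit exists, is a small but welcome clarification.
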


3. Let $\psi$ be as above, and let
\[
M_\psi(\mathbb N)=\left\{\xi=\{ \xi_n \}_{n = 1}^\ii\in l^\ii: \  \|\xi \|_{M_\psi} =
\sup\limits_{n\geq1} \frac1{\psi(n)}\sum\limits_{k=1}^n\xi^*_k<\ii\right\},
\]
the corresponding {\it Marcinkiewicz sequence space}. The space $(M_\psi(\mathbb N), \|\cdot\|_{M_\psi})$ is a fully symmetric sequence space; see, for example, \cite[Ch.\,II, \S\,5]{kps}, \cite[Part III, Ch.\,9, \S\,9.1]{rgmp}. In addition,
$\mathbf 1\notin M_\psi(\mathbb N)$ if and only if $\lim\limits_{t\to\ii}\frac{\psi(t)}{t}=0$ \cite[Ch.\,II, \S\,5]{kps}). Besides, $M_\psi(\mathbb N)= l^1$, as sets, if and only if $\psi(\ii)<\ii$.

If $\psi(+0)=0$, $\psi(\ii) =\ii$, and $\lim\limits_{t\to +0}\frac{\psi(t)}t=\ii$, then $M_\varphi$ is non-separable; see \cite{as}, \cite[Ch.\,II \S\,5, Lemma 5.4]{kps}.

If $\lim\limits_{t\to\ii}\frac{\psi(t)}{t}=0$, then $M_\psi(\mathbb N)\su c_0$, and we define
\[
\mc C_{M_\psi}=\mc C_{M_\psi(\mathbb N)},\ \ \| x\|_{\mc C_{M_\psi}}=\| x\|_{\mc C_{M_\psi(\mathbb N)}}, \ x \in \mc C_{M_\psi}.
\]
Finally, Theorems \ref{t12} and \ref{t14} imply the  following.
\begin{teo}\label{t53} Let $\psi$ be a concave function on $[0, \ii)$ with $\psi(0) = 0$, $\psi(t)>0$ for all $t > 0$,
and let $\lim\limits_{t\to\ii}\frac{\psi(t)}{t}=0$. Then
\begin{enumerate}[(i)]
\item
$(C_{M_\psi}\in (IET)$;
\item
if either $\psi(\ii)=\ii$, $\psi(+0) = 0$, and $\lim\limits_{t \to 0^+}\frac{\psi(t)}t=\ii$ or $\psi(\ii)<\ii$, then\\
$(\mc C_{M_\psi}, \|\cdot\|_{\mc C_{M_\psi}})\notin (MET)$.
\end{enumerate}
\end{teo}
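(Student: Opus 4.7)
The plan is to reduce everything to the general criteria already established in Theorems \ref{t33} and the implication used in Theorem \ref{t14}, together with the three citation-level facts about Marcinkiewicz sequence spaces collected just before the statement: $\mathbf 1\notin M_\psi(\mathbb N)$ iff $\lim_{t\to\ii}\psi(t)/t=0$; $M_\psi(\mathbb N)=l^1$ as sets iff $\psi(\ii)<\ii$; and $M_\psi(\mathbb N)$ is non-separable whenever $\psi(+0)=0$, $\psi(\ii)=\ii$ and $\lim_{t\to 0^+}\psi(t)/t=\ii$.

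For part (i), first observe that the assumption $\lim_{t\to\ii}\psi(t)/t=0$ forces $\mathbf 1\notin M_\psi(\mathbb N)$, so by the dichotomy noted in Section 2 we have $M_\psi(\mathbb N)\su c_0$. Hence $\mc C_{M_\psi}=\mc C_{M_\psi(\mathbb N)}$ is a well-defined fully symmetric Banach ideal of compact operators. Theorem \ref{t33} applied to $E=M_\psi(\mathbb N)$ then gives, for every $T\in DS$ and every $x\in\mc C_{M_\psi}$, an element $\wh x\in\mc C_{M_\psi}$ with $\|A_n(T)(x)-\wh x\|_\ii\to 0$, which is exactly $\mc C_{M_\psi}\in (IET)$.

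For part (ii), the strategy is to show that in each of the two listed cases the criterion ``$E$ separable and $E\neq l^1$ as sets'' fails, so that Theorem \ref{t14} rules out the mean ergodic theorem. In the first case, $\psi(\ii)=\ii$, $\psi(+0)=0$ and $\lim_{t\to 0^+}\psi(t)/t=\ii$, the cited result gives that $(M_\psi(\mathbb N),\|\cdot\|_{M_\psi})$ is non-separable; hence by Proposition \ref{p42} we have $\mc C_{M_\psi}\notin (MET)$. In the second case, $\psi(\ii)<\ii$, the cited result gives $M_\psi(\mathbb N)=l^1$ as sets, so by the closed graph theorem the two norms are equivalent on $M_\psi(\mathbb N)$, hence the identity map is a Banach-space isomorphism $(\mc C_{M_\psi},\|\cdot\|_{\mc C_{M_\psi}})\to(\mc C^1,\|\cdot\|_1)$; transporting the bad sequence $\{A_n(T)(x_\xi)\}$ constructed in the proof of Proposition \ref{p41} yields a $T\in DS$ and an $x\in\mc C_{M_\psi}$ for which $\{A_n(T)(x)\}$ is not $\|\cdot\|_{\mc C_{M_\psi}}$-Cauchy, hence $\mc C_{M_\psi}\notin (MET)$.

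There is no real obstacle; the proof is essentially a bookkeeping exercise matching the hypotheses on $\psi$ to the hypotheses of Theorems \ref{t12} and \ref{t14}. The only point worth checking carefully is the norm-equivalence step in the second subcase of (ii): one must be sure that the abstract equivalence of $\|\cdot\|_{M_\psi}$ and $\|\cdot\|_1$ on sequences transfers to the equivalence of $\|\cdot\|_{\mc C_{M_\psi}}$ and $\|\cdot\|_1$ on operators, which is immediate from the definition $\|x\|_{\mc C_E}=\|\{s_n(x)\}\|_E$ and the identity $\|x\|_1=\sum_n s_n(x)$.
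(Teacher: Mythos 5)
Your proposal is correct and follows essentially the same route as the paper, which simply derives Theorem \ref{t53} from Theorems \ref{t12} and \ref{t14} (via Theorem \ref{t33} for part (i), and via non-separability of $M_\psi(\mathbb N)$ respectively $M_\psi(\mathbb N)=l^1$ as sets for part (ii)). Your extra bookkeeping (the dichotomy $E\su c_0$ or $E=l^\ii$, the norm-equivalence transfer from $E$ to $\mc C_E$) just makes explicit what the paper leaves to the reader.
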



\begin{thebibliography}{99}

\bibitem{as} S.V. Astashkin, F.A. Sukochev, \textit{Banach-Saks  property in Marcinkiewicz spaces}, J. Math. Anal. Appl. {\bf 336}, 2007, 1231--1258.

\bibitem{bs} C. Bennett, R. Sharpley, \textit{Interpolation of Operators}, Academic Press Inc. (1988).

\bibitem{ca} V. Chilin, A. Azizov, \textit{Ergodic theorems in  symmetric sequences spaces}, Colloq. Math. {\bf 156} (2), 57-68. DOI: 10.4064/cm7384-2-2018.

\bibitem{cl1} V. Chilin, S. Litvinov, \textit{Ergodic theorems in fully symmetric spaces of $\tau$-measurable operators}, Studia Math. {\bf 288}(2), 2015, 177--195.

\bibitem{cl2} V. Chilin, S. Litvinov,  \textit{Individual ergodic theorems in noncommutative Orlicz spaces}, Positivity {\bf 21}(1), 2017, 49--59. DOI 10.1007/s11117-016-0402-8.

\bibitem{cl3} V. Chilin, S. Litvinov,  \textit{The validity space of Dunford-Schwartz pointwise ergodic theorem}, J. Math. Anal. Appl. {\bf 461}, 2018, 234--247. DOI 10.1016/j.jmaa.2018.01.001.

\bibitem{ds} Dunford N. and  Schwartz J. T. Linear Operators, Part I: General Theory, John Willey and Sons (1988).

\bibitem{DDP} P.G. Dodds, T.K. Dodds, and B. Pagter, \emph{Fully symmetric operator spaces}, J. Integr. Equat. Oper. Theory. {\bf 15}, 1992, 942--972.

\bibitem{ddp} P. G. Dodds, T. K. Dodds, and B. Pagter, Noncommutative K\"othe duality,
{\it Trans. Amer. Math. Soc.}, {\bf 339}(2), 1993, 717-750.

\bibitem{dds}  P. G. Dodds, T. K. Dodds, F. A. Sukochev, \textit{Banach-Saks properties in symmetric spaces of measurable operators}, Studia Math. {\bf 178}, 2007, 125--166.

\bibitem{es} G.A. Edgar, L. Sucheston, \emph{Stopping Times and Directed Processes}, Cambridge University Press (1992).

\bibitem{fk} T. Fack, H. Kosaki, {\it Generalized $s$-numbers of $\tau$-measurable operators}, Pacific. J. Math., {\bf 123}, 1986, 269-300.

\bibitem{gk} I.C. Gohberg, M.G. Krein, \textit{Introduction  to the theory of linear nonselfadjoint operators}, Translations  of Mathematical Monographs {\bf 18}, Amer. Math. Soc., Providence, RI 02904 (1969).

\bibitem{jx} M. Junge, Q. Xu, \textit{Noncommutative maximal ergodic theorems}, J. Amer. Math. Soc. {\bf 20}(2), 2007, 385--439.

\bibitem{kps} S.G. Krein, Ju.I. Petunin, and E.M. Semenov, \textit{Interpolation of Linear Operators}, Translations of Mathematical Monographs, Amer. Math. Soc. {\bf 54}(1982).

\bibitem{lt} J. Lindenstraus, L. Tsafriri, \textit{ Classical Banach spaces I-II}, Springer-Verlag, Berlin Heidelberg New York (1977).

\bibitem{li} S. Litvinov, \textit{Uniform equicontinuity of sequences of measurable functions and noncommutative ergodic theorems}, Proc. Amer. Math. Sci. {\bf 140}, 2012, 2401--2409.

\bibitem{lsz} S. Lord, F. Sukochev, D. Zanin, \textit{Singular Traces}, Walter de Gruyter GmbH, Berlin/Boston (2013).

\bibitem{rgmp} B.A. Rubshtein, G.Ya. Grabarnik, M.A. Muratov, and Yu.S. Pashkova, \emph{Foundations of Symmetric Spaces of Measurable Functions. Lorentz, Marcinkiewicz and Orlicz Spaces}. Springer International Publishing, Switzerland (2016).

\bibitem{si} B. Simon, \emph{Trace Ideals and Their Applications} {bf 120} Second edition, Mathematical Surveys and Monographs, American Mathematical Society, Providence, RI (2005).

\bibitem{sz} S. Stratila, L. Zsido, \emph{Lectures on von Neumann algebras}, Editura Academiei,
Bucharest (1979). Revision of the 1975 original, Translated from the Romanian by Silviu Teleman.

\bibitem{ta} M. Takesaki, \textit{Conditional Expectations in von Neumann Algebras},
J. Funct. Analysis. {\bf 9}, 1972, 306--321.

\bibitem{ye} F. J. Yeadon, \textit{Ergodic theorems for semifinite von Neumann algebras. I}, J. London Math. Soc., {\bf 16}(2), 1977, 326--332.

\bibitem{ye1} F. J. Yeadon, \textit{Ergodic theorems for semifinite von Neumann algebras: II}, Math. Proc. Camb. Phil. Soc., {\bf 88}, 1980, 135-147.

\end{thebibliography}
\end{document}